\journal{Computers and Mathematics with Applications}
\newtheorem{theorem}{Theorem}
\newtheorem{corollary}[theorem]{Corollary}
\newtheorem{lemma}[theorem]{Lemma}
\newcommand{\R}{\mathbb{R}}
\newcommand{\FI}{\mathcal{I}}
\newcommand{\iprod}[1]{\langle#1\rangle}
\newcommand{\bigiprod}[1]{\bigl\langle#1\bigr\rangle}
\newcommand{\biggiprod}[1]{\biggl\langle#1\biggr\rangle}
\newcommand{\Mult}{\mathcal{M}}
\newcommand{\Q}{\mathcal{Q}}
\newcommand{\E}{\mathcal E}
\newcommand{\B}{\mathcal B}
\begin{document}
\begin{frontmatter}
\title{Regularity theory for time-fractional advection-diffusion-reaction 
equations \tnoteref{mytitlenote}}
\tnotetext[mytitlenote]{The authors thank the University of New South Wales 
(Faculty Research Grant ``Efficient numerical simulation of anomalous transport 
phenomena''), the King Fahd University of Petroleum and Minerals (project 
No.~KAUST005) and the King Abdullah University of Science and Technology.}
\author{William McLean}
\address{School of Mathematics and Statistics, The University of New South 
Wales, Sydney 2052, Australia}
\author{Kassem Mustapha,  Raed  Ali}
\address{Department of Mathematics and Statistics, KFUPM,
Dhahran, 31261, KSA}
\author{Omar  M. Knio}
\address{Computer, Electrical, Mathermatical Sciences and Engineering Division,
KAUST, Thuwal 23955, KSA}
\begin{abstract}
We investigate the behavior of the time derivatives of the solution to a linear 
time-fractional, advection-diffusion-reaction equation, allowing space- and 
time-dependent coefficients as well as initial data that may have low 
regularity. Our focus is on proving estimates that are needed for the error 
analysis of numerical methods. The nonlocal nature of the fractional derivative 
creates substantial difficulties compared with the case of a classical parabolic 
PDE. In our analysis,  we rely on  novel energy methods in combination with a 
fractional Gronwall inequality and certain properties of  fractional integrals.
\end{abstract}

\begin{keyword}
{Fractional PDE, regularity analysis, energy arguments, fractional Gronwall 
inequality}
\MSC[2010]  26A33; 
35B45, 
35B65, 
35D30, 
35K57, 
35Q84, 
35R11. 
\end{keyword}
\end{frontmatter}
\section{Introduction}
This paper is the sequel to a study~\cite{McLeanEtAl2019} of existence and 
uniqueness of the weak solution to a time-fractional PDE of the form
\begin{equation}\label{eq: FPDE}
\partial_tu-\nabla\cdot\bigl(\kappa\nabla\partial_t^{1-\alpha}
	-\vec F\partial_t^{1-\alpha}-\vec G\bigr)u
	+(a\partial_t^{1-\alpha}+b)u=g
\end{equation}
for $x\in\Omega$ and $0<t\le T$, subject to the boundary and initial conditions
\begin{align}
u(x,t)&=0&&\text{for $x\in\partial\Omega$ and $0\le t\le T$,} 
	\label{eq: Dirichlet bc}\\
u(x,0)&=u_0(x)&&\text{for $x\in\Omega$.}\label{eq: initial condition}
\end{align}
Various special cases of this problem occur in descriptions of subdiffusive 
transport, with the parameter~$\alpha$ arising from a continuous-time, random 
walk model~\cite[section~3.4]{KlafterSokolov2011} in which the waiting-time 
distribution is a power law decaying like $t^{-1-\alpha}$ as~$t\to\infty$
\cite{AngstmannEtAl2017,HenryLanglandsStraka2010,HenryWearne2000,
LanglandsHenryWearne2011,LiuAnhTurnerZhuang2003, MetzlerBarkaiKlafter1999}.  For 
more details, see our related paper~\cite{McLeanEtAl2019}.

Our purpose here is to derive estimates for the derivatives of~$u$, motivated 
by their crucial role in the error analysis of numerical 
methods~\cite{CuestaLubichPalencia2006,JinLiZhou2018,
KaraaPani2018,LeMcLeanMustapha2016,LeMcLeanMustapha2018,LiaoLiZhang2018,
Mustapha2015,StynesEtAl2017} for applications included in the 
class~\eqref{eq: FPDE} of time-fractional problems. For the basic fractional 
diffusion equation, given by the special case $\vec F=\vec G=\vec 0$ and 
$a=b=0$, the solution admits a series representation via separation of 
variables, which, in combination with the asymptotics of the Mittag--Leffler 
function, yields bounds on the time derivatives of~$u$ in 
various spatial norms~\cite{McLean2010,LiXie2017}.  One may also represent the 
solution in terms of a fractional 
resolvent~\cite{McLeanThomee2010,Fan2014,KeyantuoEtAl2016}. 
These simple approaches no longer work in the general case, and the analysis 
that follows relies instead on the tools used in our study~\cite{McLeanEtAl2019} 
of well-posedness: energy methods and a fractional Gronwall inequality.

We assume that $0<\alpha<1$ and that the spatial 
domain~$\Omega\subseteq\R^d$ ($d\ge1$) is bounded and Lipschitz. The
coefficients $\vec F$, $\vec G$, $a$~and $b$, as well as the source 
term~$g$, may depend on $x$~and $t$, but the generalized 
diffusivity~$\kappa=\kappa(x)$ may depend only on~$x$.  Our theory requires 
that for appropriate~$m\ge1$,
\begin{equation}\label{eq: reg coeff} 
\vec F, \vec G\in C^{m+1}\bigl([0,T];W^1_\infty(\Omega)^d\bigr)
\quad\text{and}\quad
a, b\in C^m\bigl([0,T];L_\infty(\Omega)\bigr),
\end{equation}
where $W^k_p(\Omega)$ denotes the Sobolev space of functions with all partial 
derivatives up to and including order~$k$ belonging to~$L_p(\Omega)$.  The 
generalized diffusivity is permitted to be a bounded, 
$d\times d$~matrix-valued function, that is,
$\kappa\in L_\infty(\Omega;\R^{d\times d})$.  In addition, we ensure that the 
spatial operator~$v\mapsto-\nabla\cdot(\kappa\nabla v)$ is uniformly elliptic 
on~$\Omega$ by assuming $\kappa(x)$ is symmetric and positive-definite with its 
minimal eigenvalue bounded away from zero.  
The fractional time derivative is understood in the 
Riemann--Liouville sense, that is 
$\partial_t^{1-\alpha}v(x,t)=\partial_t \FI^\alpha v(x,t)$ with~$\FI^\alpha$ 
the fractional integral given by
\[
\FI^\alpha v(x,t)=\int_0^t\omega_\alpha(t-s)v(x,s)\,ds
\quad\text{where}\quad
\omega_\alpha(t)=t^{\alpha-1}/\Gamma(\alpha).
\]

Let $\iprod{\cdot,\cdot}$ denote the inner product in $L_2(\Omega)$~or 
$L_2(\Omega)^d$.  The weak solution~$u$ of~\eqref{eq: FPDE} is defined by the 
condition
\begin{equation}\label{eq: weak u}
\iprod{u(t),v}+\bigiprod{(\kappa\FI^\alpha\nabla u)(t),\nabla v}
    -\bigiprod{(\vec B_1u)(t),\nabla v}+\iprod{(B_2u)(t),v}=\iprod{f(t),v},
\end{equation}
for all $v\in H^1_0(\Omega)$, where $f(t)=u_0+\FI^1g(t)$ and
\begin{equation}\label{eq: vec B1 B2}
\begin{aligned}
\vec B_1\phi(t)&=\FI^1(\vec F\partial_t^{1-\alpha}\phi)(t)
    +\FI^1(\vec G\,\phi)(t),\\
B_2\phi(t)&=\FI^1(a\partial_t^{1-\alpha}\phi)(t)
    +\FI^1(b\,\phi)(t).
\end{aligned}
\end{equation}
To see why, take the inner product of~\eqref{eq: FPDE} with~$v$, apply the 
first Green identity and integrate in time (making use of the initial 
condition).  We proved in our previous 
paper~\cite[Theorems 4.1 and 4.2]{McLeanEtAl2019} that the above problem is 
well-posed in the following sense. 

\begin{theorem}\label{thm: existence} 
Assume that the coefficients satisfy~\eqref{eq: reg coeff} for~$m=1$, 
that the source term $g$ satisfies $\|g(t)\|\le Mt^{\eta-1}$ for $0<t\le T$,
where $M$~and $\eta$ are positive constants, and that the initial data 
$u_0\in L_2(\Omega)$.  Then,
problem~\eqref{eq: FPDE}--\eqref{eq: initial condition} has a unique weak 
solution~$u \in L_2\bigl((0,T);L_2(\Omega)\bigr)$ that satisfies 
\eqref{eq: weak u}, and is such that
\begin{enumerate}
\item The restriction~$u:(0,T]\to L_2(\Omega)$ is continuous.
\item If $0<t\le T$, then
$u(t)\in H^1_0(\Omega)$ with $\|u(t)\|+t^{\alpha/2}\|\nabla u(t)\|
\le C\bigl(\|u_0\|+Mt^\eta\bigr).$
\item $\FI^\alpha u,\,B_2u \in C([0,T];L_2(\Omega))$ and 
$\FI^\alpha\nabla u,\,\vec
B_1u\in C([0,T];L_2(\Omega)^d)$.
\item If $t=0$, then $\FI^\alpha u=B_2u=0$, $\FI^\alpha\nabla u=\vec B_1u=0$
and $u(0)=u_0$.
\item If $t\to0$, then $\iprod{u(t),v}\to\iprod{u(0),v}$ for each 
$v\in L_2(\Omega)$. 
\end{enumerate}
\end{theorem}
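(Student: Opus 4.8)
The plan is to prove existence by a Galerkin method, extract a priori bounds from energy estimates, and close those estimates with a fractional Gronwall inequality; uniqueness and the limiting properties then follow from the same machinery. A preliminary reduction is essential. The operators $\vec B_1,B_2$ in~\eqref{eq: vec B1 B2} ostensibly involve the fractional derivative $\partial_t^{1-\alpha}=\partial_t\FI^\alpha$, but since $(\FI^\alpha\phi)(0)=0$ and $\FI^1\partial_t\psi=\psi(t)-\psi(0)$, an integration by parts in time turns $\FI^1(\vec F\partial_t^{1-\alpha}u)$ into $\vec F(t)(\FI^\alpha u)(t)-\FI^1\bigl((\partial_t\vec F)\FI^\alpha u\bigr)$, and likewise for the $a$-term. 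This is exactly where the regularity~\eqref{eq: reg coeff} of $\vec F,\vec G,a,b$ enters, and it recasts~\eqref{eq: weak u} so that every operator acting on $u$ is a bounded Volterra operator built from $\FI^\alpha u$ and $u$, with no derivatives of the unknown.

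For existence I would fix a basis of $H^1_0(\Omega)$, say the eigenfunctions of $v\mapsto-\nabla\cdot(\kappa\nabla v)$, seek $u_N(t)=\sum_{j=1}^N c_j(t)\phi_j$ satisfying~\eqref{eq: weak u} tested against each $\phi_j$, and note that this is a linear Volterra system for the coefficients $c_j$, uniquely solvable on $[0,T]$ by a Picard/contraction argument that exploits the weak singularity $t^{\alpha-1}$ of the kernel. The heart of the matter is the a priori estimate obtained by testing with $v=u_N(t)$ and integrating over $(0,t)$: the principal term $\int_0^t\bigiprod{\kappa(\FI^\alpha\nabla u_N)(s),\nabla u_N(s)}\,ds$ is shown to be nonnegative by setting $w=\kappa^{1/2}\nabla u_N$ and invoking the positivity property $\int_0^t\iprod{(\FI^\alpha w)(s),w(s)}\,ds\ge0$, valid for $0<\alpha<1$. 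The advection and reaction contributions, now carried by $\FI^\alpha u_N$ and $u_N$, are bounded via Young's inequality for convolutions and either absorbed into this coercive term or relegated to the right-hand side.

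The main obstacle is closing these estimates and, above all, producing the sharp weighted pointwise bound in item~2. Because $\FI^\alpha$ is nonlocal, its memory term cannot be handled by the classical Gronwall lemma, so one must use the fractional Gronwall inequality, which tolerates a kernel singularity of order $\alpha-1$ and supplies the factor $t^\eta$ forced by the hypothesis $\|g(t)\|\le Mt^{\eta-1}$. Deriving $t^{\alpha/2}\|\nabla u(t)\|$ \emph{pointwise}, rather than the merely integrated control over $L_2\bigl((0,t);H^1_0(\Omega)\bigr)$ that the basic energy identity yields, is the delicate point: it needs a weighted refinement of the energy argument that tracks the $t^{\alpha/2}$ smoothing factor characteristic of subdiffusion and quantifies the gain from $u_0\in L_2(\Omega)$ to $u(t)\in H^1_0(\Omega)$ for $t>0$. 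Uniqueness, by contrast, is routine: the difference of two solutions solves the homogeneous problem, and the same coercivity-plus-fractional-Gronwall estimate forces it to vanish.

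Finally, I would pass to the limit $N\to\infty$ using the uniform bounds and weak compactness, verifying that the weak limit satisfies~\eqref{eq: weak u}. The continuity assertions in items~1 and~3 follow because $\FI^\alpha$ and the Volterra operators $\vec B_1,B_2$ map the bounds just obtained into $C([0,T];\cdot)$, smoothing the $t^{\alpha-1}$ singularity of the data. The initial-value statements in items~4 and~5 then drop out: evaluating the reduced weak form at $t=0$ gives $\FI^\alpha u=B_2u=0$ and $\FI^\alpha\nabla u=\vec B_1u=0$, leaving $\iprod{u(0),v}=\iprod{u_0,v}$, while the weak convergence $\iprod{u(t),v}\to\iprod{u(0),v}$ follows from the continuity of $t\mapsto\iprod{u(t),v}$ implied by~\eqref{eq: weak u}.
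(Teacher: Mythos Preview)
The paper does not prove this theorem here; it simply quotes it from the companion paper~\cite[Theorems 4.1 and 4.2]{McLeanEtAl2019}, so there is no in-paper proof to compare against. That said, your sketch lines up with the machinery that paper advertises: the integration-by-parts rewriting of $\vec B_1,B_2$ is precisely~\eqref{eq: Bmu}--\eqref{eq: B1 B2}, the positivity you invoke is~\eqref{eq: Q1 Plancherel}, the fractional Gronwall step is Lemma~\ref{lem: Gronwall}, and the Galerkin construction via a finite-dimensional $X=X_n\subset H^1_0(\Omega)$ is exactly what the authors allude to at the start of Section~\ref{sec: regularity}. So at the level of strategy you are on the same track as~\cite{McLeanEtAl2019}.

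The one place your outline is thin is the pointwise bound in item~2, which you flag but do not resolve. Testing with $v=u_N$ and using~\eqref{eq: Q1 Plancherel} yields only integrated control of $\Q^\alpha_1(\nabla u_N,t)$; to upgrade this to $t^{\alpha/2}\|\nabla u(t)\|\le C(\|u_0\|+Mt^\eta)$ one needs, in addition, inequalities of the type in Lemmas~\ref{lem: alpha dep}~and~\ref{lem: D} (relating $\Q^\alpha_1$, $\Q^\alpha_2$ and pointwise norms) together with a second energy identity obtained by testing with $v=\FI^\alpha u_N$. Your phrase ``weighted refinement of the energy argument'' gestures at this but does not supply it, and this is where most of the work in~\cite{McLeanEtAl2019} lies.
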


Regarding part~1 and the weak continuity in part 5, we will show 
in Theorem~\ref{thm: u cts} that $u$ is continuous on the closed 
interval~$[0,T]$ provided $u_0\in\dot H^\mu(\Omega)$ for some~$\mu>0$.  

Some partial results on the regularity of~$u$ are known.
If the coefficients in~\eqref{eq: FPDE} are independent of time, and if 
$\vec G=\vec 0$~and $b=0$, then by applying $\FI^{1-\alpha}$ to both sides the 
fractional PDE may be written in the alternative form
\begin{equation}\label{eq: alt FPDE}
\partial_t^\alpha u-\nabla\cdot(\kappa\nabla u-\vec F u)+au=\FI^{1-\alpha}g.
\end{equation}
Sakamoto and Yamamoto~\cite[Corollary~2.6]{SakamotoYamamoto2011} show, for 
example, that if $g=0$ then the solution of~\eqref{eq: alt FPDE} satisfies a 
bound of the form $\|\partial_t^mu\|\le Ct^{-m}\|u_0\|$, where $\|\cdot\|$ 
denotes the norm in~$L_2(\Omega)$.  Mu, Ahmad and Huang~\cite{MuEtAl2017} 
obtain analogous estimates using weighted H\"older norms. Recently, Le et 
al.~\cite{LeMcLeanStynes2018} studied \eqref{eq: FPDE} for the case 
$\vec G=\vec 0$~and $a=b=0$, with $\vec F=\vec F(x,t)$.  One of their 
regularity results~\cite[Theorem~7.3]{LeMcLeanStynes2018} gives the bound
$\|\partial_t^mu\|\le Ct^{-m+1/2}\|u_0\|_{H^2(\Omega)}$ when $g=0$, 
subject to the restriction $1/2<\alpha<1$.

The next section gathers together some technical preliminaries needed for our 
analysis, which uses delicate energy arguments, a fractional Gronwall 
inequality and several properties of fractional integrals to prove 
\emph{a priori} estimates for the weak solution~$u$ of
\eqref{eq: FPDE}--\eqref{eq: initial condition}. In 
Section~\ref{sec: regularity}, we estimate the derivatives of $u$~and $\nabla u$ 
with respect to time assuming $u_0\in L_2(\Omega)$. For example, 
Corollary~\ref{cor: reg simple} shows that if 
$g(t)\equiv0$ then, with~$m\ge1$ such that~\eqref{eq: reg coeff} holds,
\[
\|\partial_t^mu\|+t^{\alpha/2}\|\partial_t^m\nabla u\|
	+t^{-\alpha}\|\partial_t^{m-\alpha}u\|
	+t^{-\alpha/2}\|\partial_t^{m-\alpha}\nabla u\|
	\le Ct^{-m}\|u_0\|
\]
for~$0<t\le T$. Unlike a classical parabolic PDE, the fractional 
problem~\eqref{eq: FPDE} exhibits only limited spatial smoothing as~$t$ 
increases~\cite{McLean2010}, and in Section~\ref{sec: H2}  we investigate the 
consequences of more regular initial data. For example, 
Theorems \ref{thm: u cts}~and \ref{thm: H2 reg} show that when $g(t)\equiv0$~and 
$u_0\in\dot H^\mu(\Omega)$ for~$0\le\mu\le2$, and under additional assumptions 
on $\kappa$~and $\Omega$,
\[
\|\partial_t^mu\|
	+t^{-\alpha}\|\partial_t^{m-\alpha}u\|
	+t^\alpha\|\partial_t^mu\|_{H^2(\Omega)}
	+t^{-\alpha/2}\|\partial_t^{m-\alpha}\nabla u\|
	\le Ct^{-m+\alpha\mu/2}\|u_0\|_\mu.
\]
The paper concludes with an Appendix containing three technical lemmas.

\section{Preliminaries and notations}\label{sec: prelim}
This section introduces some notations and states some technical results that 
will be used in our subsequent regularity analysis. As in our recent 
paper~\cite{McLeanEtAl2019}, we define the quadratic operators 
$\Q^\mu_1$ and $\Q^\mu_2$, for $\mu\ge0$ and $0\le t\le T$, by
\begin{equation}\label{eq: D1D2}
\Q^\mu_1(\phi,t)=\int_0^t\iprod{\phi,\FI^\mu\phi}\,ds
\quad\text{and}\quad
\Q^\mu_2(\phi,t)=\int_0^t\|\FI^\mu\phi\|^2\,ds.
\end{equation}
These operators coincide when~$\mu=0$, so we write $\Q^0=\Q_1^0=\Q_2^0$. 
We recall the following positivity property \cite[Theorem~2]{NohelShea1976} 
\begin{equation}\label{eq: Q1 Plancherel}
\Q^\mu_1(\phi,T)\ge0\quad\text{for $0\le \mu\le 1$.}
\end{equation}

The next four lemmas establish key inequalities satisfied by $\Q^\mu_1$~and
$\Q^\mu_2$.

\begin{lemma}[\protect{\cite[Lemma 3.2]{LeMcLeanMustapha2018}}]
\label{lem: alpha dep}
If $0<\alpha<1$ and $\epsilon>0$, then
\begin{gather}
\biggl|\int_0^t\iprod{\phi,\FI^\alpha\psi}\,ds\biggr|
	\le\frac{\Q^\alpha_1(\phi,t)}{4\epsilon(1-\alpha)^2}
	+\epsilon\,\Q_1^\alpha(\psi,t), \label{eq: A}\\
\Q^\alpha_2(\phi,t)\le\frac{2t^\alpha}{1-\alpha}\,
	\Q^\alpha_1(\phi,t), \label{eq: B}\\
\Q^\alpha_1(\phi,t)\le2t^\alpha\,\Q^0(\phi,t).\label{eq: C}
\end{gather}
\end{lemma}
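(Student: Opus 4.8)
The plan is to treat the three inequalities separately, since they call for quite different tools; I will take them in the order of increasing difficulty, \eqref{eq: C} then \eqref{eq: A} then \eqref{eq: B}. Inequality \eqref{eq: C} is the most elementary: I would write $\Q^\alpha_1(\phi,t)$ as the double integral $\int_0^t\int_0^s\omega_\alpha(s-r)\iprod{\phi(s),\phi(r)}\,dr\,ds$, bound the integrand by $\tfrac12\omega_\alpha(s-r)(\|\phi(s)\|^2+\|\phi(r)\|^2)$ (Cauchy--Schwarz in $L_2(\Omega)$ together with $ab\le\tfrac12(a^2+b^2)$), and apply Fubini. Each of the two resulting terms carries a factor $\int_0^s\omega_\alpha=\omega_{\alpha+1}(s)$ or $\int_r^t\omega_\alpha=\omega_{\alpha+1}(t-r)$, both of which are at most $t^\alpha/\Gamma(1+\alpha)$; since $\Gamma(1+\alpha)\ge\tfrac12$ on $(0,1)$ this gives \eqref{eq: C} with the stated constant~$2$.

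For \eqref{eq: A} the natural route is the frequency domain. Extending $\phi$ and $\psi$ by zero outside $[0,t]$ to $\Phi,\Psi_0\in L_2(\R;L_2(\Omega))$, I would note that the integrands of $\int_0^t\iprod{\phi,\FI^\alpha\psi}\,ds$ and of $\Q^\alpha_1$ vanish outside $[0,t]$ because $\Phi$ does, so the zero extension leaves these quantities unchanged while turning $\FI^\alpha$ into the causal convolution $\omega_\alpha\ast(\cdot)$. Parseval's identity, with the multiplier $\widehat\omega_\alpha(\xi)=(i\xi)^{-\alpha}$ (justified since $|\xi|^{-\alpha}$ is locally integrable for $\alpha<1$ and $\hat\Phi$ is smooth and rapidly decaying by Paley--Wiener), then gives the exact representation $\Q^\alpha_1(\phi,t)=\tfrac1{2\pi}\int_\R\cos(\alpha\pi/2)|\xi|^{-\alpha}\|\hat\Phi(\xi)\|^2\,d\xi$, which simultaneously recovers the positivity \eqref{eq: Q1 Plancherel}, and likewise represents the bilinear form with the conjugated multiplier $\overline{\widehat\omega_\alpha}$ paired against $\iprod{\hat\Phi,\hat\Psi_0}$. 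Because $|\widehat\omega_\alpha(\xi)|=|\xi|^{-\alpha}=\mathrm{Re}\,\widehat\omega_\alpha(\xi)/\cos(\alpha\pi/2)$, a weighted Cauchy--Schwarz in $\xi$ with weight $\mathrm{Re}\,\widehat\omega_\alpha$ bounds the form by $\tfrac1{\cos(\alpha\pi/2)}\sqrt{\Q^\alpha_1(\phi,t)\,\Q^\alpha_1(\psi,t)}$. The key elementary estimate here is Jordan's inequality in the form $\cos(\alpha\pi/2)=\sin((1-\alpha)\pi/2)\ge1-\alpha$, which produces precisely the factor $(1-\alpha)^{-1}$; a final application of Young's inequality $\sqrt{ab}\le a/(4\epsilon)+\epsilon b$ delivers \eqref{eq: A}.

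Inequality \eqref{eq: B} is where I expect the real difficulty, because the same Fourier computation would represent $\Q^\alpha_2$ as $\tfrac1{2\pi}\int_\R|\xi|^{-2\alpha}\|\hat\Phi\|^2\,d\xi$, whose low-frequency part diverges once $\alpha\ge\tfrac12$; the restriction of the time integral to $[0,t]$ is essential and is simply not visible on the Fourier side. I would therefore argue in the time domain. Setting $\Psi=\FI^\alpha\phi$, one has (by density, for smooth $\phi$) $\phi=\partial_t^\alpha\Psi$ and $\Psi(0)=0$, so that $\Q^\alpha_1(\phi,t)=\int_0^t\iprod{\Psi,\partial_t^\alpha\Psi}\,ds$ and $\Q^\alpha_2(\phi,t)=\int_0^t\|\Psi\|^2\,ds$. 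The pointwise fractional convexity inequality $\iprod{\Psi,\partial_t^\alpha\Psi}\ge\tfrac12\partial_t^\alpha\|\Psi\|^2$, valid here since $\Psi(0)=0$, gives after integration and the identity $\int_0^t\partial_t^\alpha\|\Psi\|^2\,ds=\FI^{1-\alpha}\|\Psi\|^2(t)=\int_0^t\omega_{1-\alpha}(t-s)\|\Psi(s)\|^2\,ds$ the lower bound $\Q^\alpha_1\ge\tfrac12\int_0^t\omega_{1-\alpha}(t-s)\|\Psi\|^2\,ds$. Using the monotonicity $\omega_{1-\alpha}(t-s)\ge\omega_{1-\alpha}(t)=t^{-\alpha}/\Gamma(1-\alpha)$ on $[0,t]$, this is at least $\tfrac{t^{-\alpha}}{2\Gamma(1-\alpha)}\Q^\alpha_2$, and rearranging with $\Gamma(1-\alpha)=\Gamma(2-\alpha)/(1-\alpha)\le1/(1-\alpha)$ yields exactly \eqref{eq: B}. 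The main obstacle is thus to locate this time-domain coercivity estimate --- the fractional chain-rule inequality combined with the monotonicity of $\omega_{1-\alpha}$ --- rather than to rely on the Fourier machinery that suffices for \eqref{eq: A} and the positivity property.
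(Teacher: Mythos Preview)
The paper does not supply its own proof of this lemma; it is quoted verbatim from an external reference (Lemma~3.2 of \cite{LeMcLeanMustapha2018}), so there is no in-paper argument to compare your plan against.  Your outline is correct for all three parts.  Two small technical remarks.  First, in the Fourier step for \eqref{eq: A}, Paley--Wiener tells you that $\hat\Phi$ extends to an entire function of exponential type and is therefore continuous, but it is \emph{not} ``rapidly decaying'' since $\Phi$ is only $L_2$ with compact support; what you actually need (and have) is that $|\xi|^{-\alpha}\|\hat\Phi(\xi)\|^2$ is integrable, which follows from boundedness of $\hat\Phi$ near $\xi=0$ combined with local integrability of $|\xi|^{-\alpha}$ for $\alpha<1$, together with $\hat\Phi\in L_2$ and $|\xi|^{-\alpha}\le1$ for $|\xi|\ge1$.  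Second, your time-domain route to \eqref{eq: B} via the fractional convexity (Alikhanov) inequality $\iprod{\Psi,\partial_t^\alpha\Psi}\ge\tfrac12\partial_t^\alpha\|\Psi\|^2$ and the monotonicity of $\omega_{1-\alpha}$ is valid; the intermediate bound $\FI^{1-\alpha}\bigl(\|\FI^\alpha\phi\|^2\bigr)(t)\le2\Q^\alpha_1(\phi,t)$ that you derive along the way is exactly the second inequality stated in Lemma~\ref{lem: D} of the paper, so your argument dovetails with tools already in use here.
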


\begin{lemma}[\protect{\cite[Lemmas 2.2 and 2.3]{McLeanEtAl2019}}]
\label{lem: D}
If $0<\alpha\le1$, then for $\phi \in L_2\bigl((0,t),L_2(\Omega)\bigr)$,
\[
\Q^\alpha_2(\phi,t)\le2\int_0^t \omega_\alpha(t-s)\Q^\alpha_1(\phi,s)\,ds
\quad\text{and}\quad
\FI^{1-\alpha}\bigl(\|\FI^\alpha\phi\|^2\bigr)\le2\Q^\alpha_1(\phi,t).
\]
Furthermore, if $\phi\in W^1_1\bigl((0,t);L_2(\Omega)\bigr)$~and
$\phi(0)=\FI^\alpha\phi'(0)=0$, then
$$\|\phi(t)\|^2\le2\omega_{2-\alpha}(t)\,\Q_1^\alpha(\phi',t).$$
\end{lemma}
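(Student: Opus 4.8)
The plan is to represent $\phi$ itself as a composition of fractional integrals and then estimate by Cauchy--Schwarz. Since $\phi\in W^1_1\bigl((0,t);L_2(\Omega)\bigr)$ is absolutely continuous in time and $\phi(0)=0$, we have $\phi(t)=\int_0^t\phi'(s)\,ds=\FI^1\phi'(t)$. Using the semigroup property $\FI^1=\FI^{1-\alpha}\FI^\alpha$ of the fractional integral, this becomes $\phi=\FI^{1-\alpha}w$ with $w:=\FI^\alpha\phi'$; the hypothesis $\FI^\alpha\phi'(0)=0$ guarantees that $w$ is a genuine $L_2(\Omega)$-valued function vanishing at the origin, so that this factorization is legitimate.

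Next I would apply Cauchy--Schwarz twice to $\phi(t)=\int_0^t\omega_{1-\alpha}(t-s)\,w(s)\,ds$. First, pulling the $L_2(\Omega)$ norm inside the nonnegative-weight integral gives $\|\phi(t)\|\le\int_0^t\omega_{1-\alpha}(t-s)\,\|w(s)\|\,ds$. Splitting the weight as $\omega_{1-\alpha}(t-s)=\omega_{1-\alpha}(t-s)^{1/2}\cdot\omega_{1-\alpha}(t-s)^{1/2}$ and applying Cauchy--Schwarz in $s$ then yields
\[
\|\phi(t)\|^2\le\Bigl(\int_0^t\omega_{1-\alpha}(t-s)\,ds\Bigr)\int_0^t\omega_{1-\alpha}(t-s)\,\|w(s)\|^2\,ds .
\]
The first factor is exactly $\FI^{1-\alpha}1(t)=\omega_{2-\alpha}(t)$, while the second factor is $\FI^{1-\alpha}\bigl(\|\FI^\alpha\phi'\|^2\bigr)(t)$.

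It remains to bound this last fractional integral, and here I would invoke the second inequality of the present lemma (applied to $\phi'$ in place of $\phi$), which gives $\FI^{1-\alpha}\bigl(\|\FI^\alpha\phi'\|^2\bigr)\le2\Q^\alpha_1(\phi',t)$. Combining the two displays produces the claimed bound $\|\phi(t)\|^2\le2\omega_{2-\alpha}(t)\,\Q^\alpha_1(\phi',t)$. The one delicate point is the regularity needed to invoke that earlier inequality: it is stated for $\phi'\in L_2\bigl((0,t);L_2(\Omega)\bigr)$, whereas here $\phi'$ is only assumed $L_1$ in time. I expect this to be the main obstacle, to be handled either by noting that the right-hand side $\Q^\alpha_1(\phi',t)$ may be taken finite (otherwise the inequality is trivial) and arguing by a density approximation of $\phi'$ in the relevant quadratic form, or by verifying directly that finiteness of $\Q^\alpha_1(\phi',t)$ already justifies each manipulation above through the positivity and monotone-convergence structure of the weighted integrals.
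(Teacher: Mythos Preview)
The paper does not supply its own proof of this lemma; it is quoted verbatim from the companion paper~\cite[Lemmas 2.2 and 2.3]{McLeanEtAl2019}, so there is no in-text argument to compare against.  Your derivation of the third inequality is correct and is the natural route: write $\phi=\FI^1\phi'=\FI^{1-\alpha}\FI^\alpha\phi'$, apply Cauchy--Schwarz to the $\omega_{1-\alpha}$-weighted convolution, identify $\int_0^t\omega_{1-\alpha}=\omega_{2-\alpha}(t)$, and invoke the second inequality of the lemma with $\phi'$ in place of~$\phi$.  This is in fact how the result is obtained in the cited reference.

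The regularity issue you flag---that the second inequality is stated for $L_2$-in-time arguments whereas $\phi'$ is only assumed $L_1$---is a genuine but minor gap.  Your proposed fix (assume $\Q^\alpha_1(\phi',t)<\infty$, else trivial, and approximate) is adequate; in practice every application of this estimate in the paper has $\phi'$ at least locally $L_2$ in time, so the point does not bite.
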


\begin{lemma}[\protect{\cite[Lemma 3.1]{LeMcLeanMustapha2018}}]
\label{lem: E} 
If $0\le\mu\le\nu\le1$, then
$\Q^\nu_2(\phi,t)\le 2t^{2(\nu-\mu)}\Q^\mu_2(\phi,t)$.
\end{lemma}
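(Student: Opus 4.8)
The plan is to reduce the general statement to the special case $\mu=0$ by means of the semigroup property $\FI^\nu=\FI^{\nu-\mu}\FI^\mu$ of the Riemann--Liouville fractional integral, and then to estimate a single fractional integral by a Cauchy--Schwarz argument. Writing $\beta=\nu-\mu$, so that $0\le\beta\le1$, and putting $\psi=\FI^\mu\phi$, I have $\FI^\nu\phi=\FI^\beta\psi$ and hence $\Q^\nu_2(\phi,t)=\Q^\beta_2(\psi,t)$, while $\Q^\mu_2(\phi,t)=\int_0^t\|\psi\|^2\,ds=\Q^0(\psi,t)$. Thus it suffices to show that $\Q^\beta_2(\psi,t)\le 2t^{2\beta}\,\Q^0(\psi,t)$ for $0\le\beta\le1$. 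When $\beta=0$ this is trivial, since $\FI^0$ is the identity and the factor $2$ is wasteful, so I would assume $0<\beta\le1$.

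Next I would bound $\FI^\beta\psi$ pointwise in time. Since $\omega_\beta\ge0$, the triangle inequality for the $L_2(\Omega)$-norm gives $\|\FI^\beta\psi(s)\|\le\int_0^s\omega_\beta(s-r)\|\psi(r)\|\,dr$, and then Cauchy--Schwarz, splitting the kernel as $\omega_\beta(s-r)=\omega_\beta(s-r)^{1/2}\,\omega_\beta(s-r)^{1/2}$, yields
\[
\|\FI^\beta\psi(s)\|^2\le\Bigl(\int_0^s\omega_\beta(s-r)\,dr\Bigr)\int_0^s\omega_\beta(s-r)\|\psi(r)\|^2\,dr.
\]
The first factor equals $\int_0^s\omega_\beta(\tau)\,d\tau=\omega_{\beta+1}(s)=s^\beta/\Gamma(\beta+1)$, which is non-decreasing in $s$ because $\beta\ge0$ and is therefore at most $\omega_{\beta+1}(t)=t^\beta/\Gamma(\beta+1)$.

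Integrating this inequality over $s\in(0,t)$, bounding the first factor by $\omega_{\beta+1}(t)$, and applying Fubini's theorem to the resulting double integral over $\{0<r<s<t\}$, I obtain
\[
\Q^\beta_2(\psi,t)\le\omega_{\beta+1}(t)\int_0^t\|\psi(r)\|^2\Bigl(\int_r^t\omega_\beta(s-r)\,ds\Bigr)dr,
\]
where the inner integral is $\int_0^{t-r}\omega_\beta(\tau)\,d\tau=\omega_{\beta+1}(t-r)\le\omega_{\beta+1}(t)$, again by monotonicity. Hence $\Q^\beta_2(\psi,t)\le\omega_{\beta+1}(t)^2\,\Q^0(\psi,t)=t^{2\beta}\Gamma(\beta+1)^{-2}\,\Q^0(\psi,t)$, and it remains only to check that $\Gamma(\beta+1)^{-2}\le2$, i.e.\ $\Gamma(\beta+1)^2\ge\tfrac12$ for $0\le\beta\le1$; this holds because $\Gamma$ attains its minimum on $[1,2]$ near $1.46$, with value $\approx0.886>1/\sqrt2$. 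There is no serious obstacle here: the only points needing care are the correct invocation of the semigroup property in the reduction to $\mu=0$ and the bookkeeping in the two nested convolution integrals (both monotonicity bounds requiring $\beta\ge0$). The passage to the stated constant $2$ is generous, and in fact the same argument delivers the sharper factor $\Gamma(\beta+1)^{-2}$.
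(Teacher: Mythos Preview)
Your proof is correct. The paper itself does not prove this lemma but merely cites it from another source, so there is no in-paper argument to compare against; your self-contained route via the semigroup identity $\FI^\nu=\FI^{\nu-\mu}\FI^\mu$ and the Cauchy--Schwarz splitting of the kernel $\omega_\beta$ is the standard one and in fact delivers the sharper constant $\Gamma(\nu-\mu+1)^{-2}\le2$.
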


We will make essential use of the following fractional Gronwall inequality.

\begin{lemma}[\protect{\cite[Theorem~3.1]{DixonMcKee1986}}]
\label{lem: Gronwall}
Let $\beta>0$ and $T>0$.  Assume that $\mathsf{a}$ and $\mathsf{b}$ are
non-negative and non-decreasing functions on the interval~$[0,T]$.  If
$\mathsf{q}:[0,T]\to\R$ is an integrable function satisfying
\[
0\le\mathsf{q}(t)\le\mathsf{a}(t)
	+\mathsf{b}(t)\FI^\beta{q}(t)
    \quad\text{for $0\le t\le T$,}
\]
then
\[
\mathsf{q}(t)\le\mathsf{a}(t)E_\beta\bigl(\mathsf{b}(t)t^\beta\bigr)
    \quad\text{for $0\le t\le T$.}
\]
\end{lemma}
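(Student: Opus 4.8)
The plan is to reduce the variable-coefficient inequality to a constant-coefficient one by exploiting monotonicity, and then to iterate the resulting integral inequality, summing the series into a Mittag--Leffler function. First I would fix $t\in(0,T]$ and use that $\mathsf a$ and $\mathsf b$ are non-decreasing: for every $s\in[0,t]$ we have $\mathsf a(s)\le\mathsf a(t)=:A$ and $\mathsf b(s)\le\mathsf b(t)=:B$, so the hypothesis yields the constant-coefficient bound $0\le\mathsf q(s)\le A+B\,\FI^\beta\mathsf q(s)$ on $[0,t]$. Writing $L\phi:=B\,\FI^\beta\phi$, this reads $\mathsf q\le A+L\mathsf q$ on $[0,t]$, where $L$ is a linear operator that preserves the pointwise order because its convolution kernel $B\,\omega_\beta$ is non-negative.

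The core step is iteration. Since $L$ is order-preserving and linear, applying it repeatedly to $\mathsf q\le A+L\mathsf q$ gives, by induction, $\mathsf q\le A\sum_{k=0}^{n-1}L^k\mathbf 1+L^n\mathsf q$ for every $n\ge1$, where $\mathbf 1$ is the constant function $1$. The semigroup property $\FI^\beta\FI^\gamma=\FI^{\beta+\gamma}$ together with the identity $\FI^\beta\omega_{\gamma}=\omega_{\beta+\gamma}$ lets me compute each term explicitly: since $\mathbf 1=\omega_1$, we have $L^k\mathbf 1=B^k\FI^{k\beta}\mathbf 1=B^k\omega_{k\beta+1}(s)=B^ks^{k\beta}/\Gamma(k\beta+1)$. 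The partial sum $A\sum_{k=0}^{n-1}B^ks^{k\beta}/\Gamma(k\beta+1)$ therefore converges as $n\to\infty$ to $A\,E_\beta(Bs^\beta)$, which is exactly the claimed majorant once evaluated at $s=t$.

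It remains to show the remainder $L^n\mathsf q=B^n\FI^{n\beta}\mathsf q$ vanishes as $n\to\infty$, and this is where the integrability hypothesis on $\mathsf q$ does the work. For $n$ large enough that $n\beta\ge1$, the kernel satisfies $(s-r)^{n\beta-1}\le s^{n\beta-1}$, so
$$|L^n\mathsf q(s)|\le\frac{B^n}{\Gamma(n\beta)}\int_0^s(s-r)^{n\beta-1}|\mathsf q(r)|\,dr\le\frac{B^ns^{n\beta-1}}{\Gamma(n\beta)}\int_0^t|\mathsf q(r)|\,dr,$$
and since $\Gamma(n\beta)$ grows faster than any geometric factor, the right-hand side tends to $0$. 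Passing to the limit then yields $\mathsf q(s)\le A\,E_\beta(Bs^\beta)$ for $s\in[0,t]$; taking $s=t$ and recalling $A=\mathsf a(t)$, $B=\mathsf b(t)$ gives the result. The step I expect to require the most care is the remainder estimate: I must confirm that the Mittag--Leffler series converges and that the tail $L^n\mathsf q$ vanishes using only integrability of $\mathsf q$ — in particular justifying the passage to the limit in the pointwise inequality — rather than any continuity or boundedness that is not assumed.
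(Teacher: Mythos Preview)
The paper does not supply its own proof of this lemma: it simply quotes the result from Dixon and McKee~\cite{DixonMcKee1986} and uses it as a black box. Your argument is a correct, self-contained proof along the standard lines---freeze~$t$, exploit monotonicity of $\mathsf a$~and $\mathsf b$ to reduce to constant coefficients, then Picard-iterate and sum into the Mittag--Leffler series---and the remainder estimate using only $\mathsf q\in L_1(0,T)$ is handled properly.
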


Let $\Mult^j$ denote the operator of pointwise multiplication by~$t^j$, that 
is, 
\[
(\Mult^j\phi)(t)=t^j\phi(t), 
\]
and note the commutator properties (for any integer~$j\ge1$ and any 
real~$\mu\ge0$)
\begin{equation}\label{eq: commutator}
\partial_t^j\Mult-\Mult\partial_t^j=j\partial_t^{j-1},\quad
\partial_t\Mult^j-\Mult^j\partial_t=j\Mult^{j-1},\quad
\Mult\FI^\mu-\FI^\mu\Mult=\mu\FI^{\mu+1}.
\end{equation}
The following identities then follow by induction on~$m$.

\begin{lemma}\label{lem: m-fold}
For $0\le q\le m$ and $\mu\ge0$, there exist constant coefficients $a^{m,q}_j$,
$b^{m,q}_j$, $c^{m,\mu}$~and $d^{m,\mu}_j$ such that
\begin{align}
\partial_t^q\Mult^m&=\Mult^m\partial_t^q
	+\sum_{j=1}^qa^{m,q}_j\Mult^{m-j}\partial_t^{q-j},\label{eq: m-fold 1}\\
\Mult^m\partial_t^q&=\partial_t^q\Mult^m
	+\sum_{j=1}^qb^{m,q}_j\partial_t^{q-j}\Mult^{m-j},\label{eq: m-fold 2}\\
\FI^\mu\Mult^m&=\Mult^m\FI^\mu
	+\sum_{j=1}^mc^{m,\mu}_j\Mult^{m-j}\FI^{\mu+j},\label{eq: m-fold 3}\\
\Mult^m\FI^\mu&=\FI^\mu\Mult^m
	+\sum_{j=1}^md^{m,\mu}_j\FI^{\mu+j}\Mult^{m-j}.\label{eq: m-fold 4}
\end{align}
For later reference, we set $a^{m,q}_0=b^{m,q}_0=c^{m,\mu}_0=d^{m,\mu}_0=1$ and
\[
\tilde a^{m,q}_j=a^{m,q}_{q-j},\quad
\tilde b^{m,q}_j=b^{m,q}_{q-j},\quad
\tilde c^{m,\mu}_j=c^{m,\mu}_{m-j},\quad
\tilde d^{m,\mu}_j=d^{m,\mu}_{m-j}.
\]
\end{lemma}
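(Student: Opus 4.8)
The plan is to prove the four identities \eqref{eq: m-fold 1}--\eqref{eq: m-fold 4} by induction on $m$, taking the elementary commutator relations \eqref{eq: commutator} as the base case ($m=1$) and engine of the induction. The essential idea is that each identity expresses how an $m$-fold multiplication operator $\Mult^m$ commutes past an operator ($\partial_t^q$ or $\FI^\mu$), producing a ``correction'' sum of lower-order terms; the commutator properties tell us exactly the leading correction when $m=1$, and the induction propagates this structure. I would treat \eqref{eq: m-fold 1} and \eqref{eq: m-fold 2} together (they are inverse/transposed versions of the same commutation between $\partial_t^q$ and $\Mult^m$), and likewise \eqref{eq: m-fold 3} and \eqref{eq: m-fold 4} together (the commutation between $\FI^\mu$ and $\Mult^m$).

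First I would establish \eqref{eq: m-fold 1}. For the base case $m=1$, I note that the first relation in \eqref{eq: commutator} gives $\partial_t^q\Mult=\Mult\partial_t^q+q\,\partial_t^{q-1}$, which is exactly \eqref{eq: m-fold 1} with $m=1$, so $a^{1,q}_1=q$ and $a^{1,q}_j=0$ for $j>1$. For the inductive step, I would write $\Mult^{m+1}=\Mult\,\Mult^m$, apply the induction hypothesis to $\partial_t^q\Mult^m$, then push the outer factor of $\Mult$ leftward past each surviving $\partial_t^{q-j}$ using the $m=1$ relation once more. Collecting terms by their form $\Mult^{m+1-i}\partial_t^{q-i}$ yields a recursion for the coefficients $a^{m+1,q}_i$ in terms of $a^{m,q}_{i-1}$ and $a^{m,q}_i$; the key point is only that such constants \emph{exist}, not their closed form, so I would not need to solve the recursion explicitly. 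The argument for \eqref{eq: m-fold 2} is identical in structure, using the second commutator relation $\partial_t\Mult^j-\Mult^j\partial_t=j\Mult^{j-1}$ (equivalently reading \eqref{eq: m-fold 1} from the other side), and for \eqref{eq: m-fold 3}--\eqref{eq: m-fold 4} I would use the third relation $\Mult\FI^\mu-\FI^\mu\Mult=\mu\FI^{\mu+1}$, noting that here each commutation \emph{raises} the fractional-integration order by one, which is why the sum in \eqref{eq: m-fold 3} runs to $j=m$ and produces terms $\Mult^{m-j}\FI^{\mu+j}$.

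The main obstacle, such as it is, is purely bookkeeping: keeping the summation indices and the ranges of $j$ consistent across the inductive step so that the reindexed correction terms from the induction hypothesis recombine cleanly with the new terms generated by the single commutator move, and verifying that the convention $a^{m,q}_0=\dots=1$ for the $j=0$ (leading, uncorrected) term is respected throughout. No genuine analytic difficulty arises, since the fractional integral $\FI^\mu$ and the derivatives $\partial_t^q$ enter only algebraically through the three exact commutator identities; the induction is entirely formal. I would therefore present the computation for \eqref{eq: m-fold 1} in full and remark that \eqref{eq: m-fold 2}--\eqref{eq: m-fold 4} follow by the same induction, merely substituting the appropriate relation from \eqref{eq: commutator} and adjusting the summation range.
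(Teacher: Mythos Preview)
Your proposal is correct and matches the paper's approach exactly: the paper simply states that the identities ``follow by induction on~$m$'' from the commutator properties~\eqref{eq: commutator}, which is precisely the plan you outline. One minor bookkeeping point to watch is the endpoint case~$q=m+1$ in the inductive step for~\eqref{eq: m-fold 1}--\eqref{eq: m-fold 2}, since the hypothesis at level~$m$ only covers~$q\le m$; this is easily handled by a secondary induction on~$q$ (using $\partial_t\Mult^k-\Mult^k\partial_t=k\Mult^{k-1}$) once the cases~$q\le m$ are in hand.
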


When $\mu=0$ the formulas involving $\FI^\mu$ become redundant, and we see that
$c^{m,0}_j=0=d^{m,0}_j$ for $1\le j\le m$. Likewise, $\omega_{-j}(t)=0$ 
for $0\le j\le m$ since $\Gamma(z)$ has a pole at~$z=-j$.
We conclude this section by noting that if $m\ge1$~and $\mu\ge0$, then
\begin{equation}\label{eq: FI omega}
\partial_t^m\FI^\mu\phi(t)=\FI^\mu\partial_t^m\phi(t)
	+\sum_{j=0}^{m-1}(\partial_t^j\phi)(0)\omega_{\mu-j}(t)
\quad\text{for $\phi\in W^m_1\bigl((0,t);L_2(\Omega)\bigr)$},
\end{equation}
which amounts to a restatement of the relation between the Riemann--Liouville 
and Caputo fractional derivatives. 
\section{Regularity of the weak solution}\label{sec: regularity}
Our aim in this section is to estimate 
higher-order time derivatives of~$u$ assuming appropriate bounds on
the higher-order time derivatives of~$f$ (and hence, ultimately, of~$g$),
as well as sufficient smoothness of the coefficients in~\eqref{eq: FPDE}.  We
will not attempt to prove the \emph{existence} of the higher-order derivatives
of~$u$, which could be done by estimating the corresponding derivatives of the 
projected solution~$u_X$ from our earlier paper~\cite{McLeanEtAl2019} 
corresponding to a finite dimensional subspace~$X=X_n\subset H^1_0(\Omega)$, 
and then taking appropriate limits as~$n\to\infty$. For the remainder of the 
paper, we assume that \eqref{eq: reg coeff} holds, and that 
\[
\|g^{(j-1)}(t)\|=O(t^{\alpha-j})\quad\text{as $t\to0$, for~$1\le j\le m$.}
\] 
It follows that the existence and uniqueness of the weak solution~$u$ are 
guaranteed by Theorem~\ref{thm: existence}. Henceforth, $C$ will denote a 
generic constant that may depend on the coefficients in~\eqref{eq: FPDE}, the 
spatial domain~$\Omega$, the time interval~$[0,T]$, the fractional 
exponent~$\alpha$, the parameter~$\eta$, and the integer~$m$ 
in~\eqref{eq: reg coeff}.  Also, we rescale the time variable if necessary so 
that the minimum eigenvalue of~$\kappa$ satisfies
\begin{equation}\label{eq: min eig}
\lambda_{\min}\bigl(\kappa(x)\bigr)\ge1\quad\text{for $x\in\Omega$.}
\end{equation}

For brevity, we introduce some more notations. Let 
\begin{equation}\label{eq: Bmu}(B^\mu_\psi\phi)(t)=\psi(t)\,\FI^\mu\phi(t)
	-\FI^1(\psi'\,\FI^\mu\phi)(t),\quad\text{for $0\le \mu \le 1$.}
\end{equation}
Integrating by parts and recalling~\eqref{eq: vec B1 B2}, we find that 
\begin{equation}\label{eq: B1 B2}
\vec B_1=B_{\vec F}^\alpha+B_{\vec G}^1
\quad\text{and}\quad
B_2=B_a^\alpha+B_b^1.
\end{equation}
Generalizing \eqref{eq: Bmu}, for $j\in\{0,1,2,\ldots\}$ we put
\[
B^{\mu,j}_\psi\phi(t)
	=\partial_t^j\Big(\Mult^j\FI^1(\psi\partial_t^{1-\mu}\phi)\Big)(t)
	=(\Mult^jB^\mu_\psi\phi)^{(j)}(t),
\]
and generalizing \eqref{eq: D1D2} we put
\[
\Q^{\mu,j}_i(\phi,t)=\Q^\mu_i\bigl((\Mult^j\phi)^{(j)},t\bigr),
\quad\text{for $0\le t\le T$ and $i\in\{1,2\}$,}
\]
with $\Q^{0,j}=\Q^{0,j}_1=\Q^{0,j}_2$.   The next result relies on Lemma 
\ref{lem: B integrals} from the Appendix.

\begin{lemma}\label{lem: stability C}
For $0< t\le T$ and for $m\ge1$,
\[
\Q^{\alpha,m}_1(u,t) +\Q^{\alpha,m}_2(\nabla u,t)
	\le Ct^\alpha\sum_{j=0}^m\Q^{0,j}(f,t),
\]
and
\[
\Q^{0,m}(u,t)+\Q^{\alpha,m}_1(\nabla u,t)
	\le C\sum_{j=0}^m\Q^{0,j}(f,t).
\]
\end{lemma}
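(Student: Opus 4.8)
The plan is to exploit the close relationship between the two displayed bounds and reduce everything to a single induction on~$m$. Writing $w=(\Mult^m u)^{(m)}$, the definition of $\Q^{\alpha,m}_i$ together with \eqref{eq: C} gives $\Q^{\alpha,m}_1(u,t)=\Q^\alpha_1(w,t)\le 2t^\alpha\Q^0(w,t)=2t^\alpha\Q^{0,m}(u,t)$, while \eqref{eq: B} gives $\Q^{\alpha,m}_2(\nabla u,t)\le\frac{2t^\alpha}{1-\alpha}\Q^{\alpha,m}_1(\nabla u,t)$. Hence the first displayed inequality follows at once from the second, and the whole proof reduces to establishing
\[
\Q^{0,m}(u,t)+\Q^{\alpha,m}_1(\nabla u,t)\le C\sum_{j=0}^m\Q^{0,j}(f,t),
\]
which I would prove by induction on~$m$, the base case $m=0$ being the standard energy estimate for~\eqref{eq: weak u} obtained in establishing Theorem~\ref{thm: existence}.

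For the induction step I would apply the operator $\partial_t^m\Mult^m$ to the weak formulation~\eqref{eq: weak u} (rigorously on the finite-dimensional projected solution $u_X$ and then passing to the limit, as noted above). Since $\kappa=\kappa(x)$ is independent of~$t$ and the test function~$v$ is fixed, the commutator identity~\eqref{eq: m-fold 4}, the semigroup relation $\FI^{\alpha+j}=\FI^j\FI^\alpha$ with $\partial_t^m\FI^j=\partial_t^{m-j}$, and~\eqref{eq: FI omega} transform the principal term into
\[
(\Mult^m\FI^\alpha\nabla u)^{(m)}=\FI^\alpha\nabla w+\sum_{j=1}^m d^{m,\alpha}_j\,\FI^\alpha\nabla w_{m-j},\qquad w_k=(\Mult^k u)^{(k)}.
\]
The crucial simplification is that the factor $\Mult^m$ forces $\Mult^{m-j}u$ to vanish to order $m-j$ at $t=0$, so every initial-value correction in~\eqref{eq: FI omega} drops out. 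The operators $\vec B_1,B_2$ are rewritten through~\eqref{eq: B1 B2} as $(\Mult^m\vec B_1u)^{(m)}=B^{\alpha,m}_{\vec F}u+B^{1,m}_{\vec G}u$ and similarly for $B_2$. Testing the resulting identity with $v=w(t)$ and integrating over $(0,t)$ produces $\Q^{0,m}(u,t)$ from the first term and, by the normalization~\eqref{eq: min eig} and the positivity~\eqref{eq: Q1 Plancherel}, a $\kappa$-weighted quadratic form bounded below by $\Q^{\alpha,m}_1(\nabla u,t)$.

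It then remains to estimate the source term $\int_0^t\iprod{(\Mult^m f)^{(m)},w}\,ds$, the commutator terms carrying $\FI^\alpha\nabla w_{m-j}$, and the $B$-terms. The source term is bounded by Cauchy--Schwarz and Young's inequality by $\epsilon\,\Q^{0,m}(u,t)+C\,\Q^{0,m}(f,t)$. Using the symmetry and boundedness of~$\kappa$ together with~\eqref{eq: A}, each commutator term is bounded by a small multiple of the coercive quantity $\Q^{\alpha,m}_1(\nabla u,t)$, to be absorbed on the left, plus a constant multiple of $\Q^{\alpha,m-j}_1(\nabla u,t)$, which the induction hypothesis controls by $C\sum_{i=0}^{m-j}\Q^{0,i}(f,t)$. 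The $B$-terms are precisely the objects estimated by Lemma~\ref{lem: B integrals} from the Appendix.

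The delicate point, and the step I expect to be the main obstacle, is the advection contribution $B^{\alpha,m}_{\vec F}u$: its leading part pairs $\vec F\,\FI^\alpha w$ against $\nabla w$, and the available coercivity controls $\Q^{\alpha,m}_1(\nabla u)=\int_0^t\iprod{\nabla w,\FI^\alpha\nabla w}\,ds$ but not $\int_0^t\|\nabla w\|^2\,ds$, so a naive Young estimate fails. I would resolve this by integrating by parts in space—legitimate since $w(t)\in H^1_0(\Omega)$—to move the gradient off~$w$, so that the remaining terms pair only with $w$ and with $\FI^\alpha\nabla w$; Young's inequality then yields a controllable multiple of $\Q^{0,m}(u,t)$ together with $\Q^{\alpha,m}_2(u,t)$ and $\Q^{\alpha,m}_2(\nabla u,t)$. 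The first inequality of Lemma~\ref{lem: D} converts these last two into $\FI^\alpha$ of $\Q^{\alpha,m}_1$, and~\eqref{eq: C} relates $\Q^{\alpha,m}_1(u)$ to $\Q^{0,m}(u)$. Collecting all contributions and absorbing the $\epsilon$-terms gives an inequality of the form $\mathsf{q}(t)\le\mathsf{a}(t)+C\,\FI^\alpha\mathsf{q}(t)$ with $\mathsf{q}(t)=\Q^{0,m}(u,t)+\Q^{\alpha,m}_1(\nabla u,t)$ and $\mathsf{a}$ a non-decreasing majorant of $C\sum_{j=0}^m\Q^{0,j}(f,t)$; the fractional Gronwall inequality, Lemma~\ref{lem: Gronwall} with $\beta=\alpha$, then closes the estimate and completes the induction.
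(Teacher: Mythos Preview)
Your plan for the second inequality is essentially the paper's own argument: apply $\partial_t^m\Mult^m$ to \eqref{eq: weak u}, test with $v=(\Mult^m u)^{(m)}$, integrate the advection terms by parts in space so that $\nabla\cdot B^{\alpha,m}_{\vec F}u=B^{\alpha,m}_{\nabla\cdot\vec F}u+B^{\alpha,m}_{\vec F\cdot{}}\nabla u$ and $\nabla\cdot B^{1,m}_{\vec G}u$ become amenable to Lemma~\ref{lem: B integrals}, feed the resulting top-order quantities $\Q^{\alpha,m}_2(u,t)$ and $\Q^{\alpha,m}_2(\nabla u,t)$ into a Gronwall term via the first part of Lemma~\ref{lem: D} together with~\eqref{eq: C}, and absorb the lower-order $j<m$ contributions by recursion. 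One small clarification: after the spatial integration by parts the $B$-terms produce $\sum_{j=0}^m\bigl(\Q^{\alpha,j}_2(u,t)+\Q^{\alpha,j}_2(\nabla u,t)\bigr)$, not just the $j=m$ summand you single out; the paper bounds the $j<m$ pieces by $C\mathsf{q}_j$ using \eqref{eq: B}~and \eqref{eq: C}, exactly in line with your induction scheme.

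Where you genuinely diverge from the paper is in your opening reduction. The paper proves the two displayed inequalities \emph{separately} and in the stated order: the first by testing \eqref{eq: dervi m} with $v=\FI^\alpha(\Mult^m u)^{(m)}$, which makes the diffusion term yield $\Q^{\alpha,m}_2(\nabla u,t)$ directly and lets the $B$-terms be controlled by \eqref{eq: E1u}--\eqref{eq: E2u} without the spatial integration by parts; only afterwards does it test with $v=(\Mult^m u)^{(m)}$ to obtain the second inequality. Your observation that \eqref{eq: C} and \eqref{eq: B} give $\Q^{\alpha,m}_1(u,t)+\Q^{\alpha,m}_2(\nabla u,t)\le Ct^\alpha\bigl(\Q^{0,m}(u,t)+\Q^{\alpha,m}_1(\nabla u,t)\bigr)$ makes the first inequality an immediate corollary of the second and eliminates one of the two energy arguments. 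This is a clean and valid shortcut; the paper's route has the minor advantage that its first estimate never needs $\nabla\cdot\vec F$ or $\nabla\cdot\vec G$, but under the standing assumption~\eqref{eq: reg coeff} that costs nothing.
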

\begin{proof}
Since $(\FI^\alpha\nabla u)(0)=0$ by part~4 of Theorem~\ref{thm: existence},
\begin{align*}
\int_0^t\bigiprod{\kappa\nabla\partial_s^{1-\alpha}u(s),\nabla v}\,ds
	&=\biggiprod{\kappa\int_0^t(\FI^\alpha\nabla u)'(s)\,ds,\nabla v}
	=\bigiprod{\kappa(\FI^\alpha\nabla u)(t),\nabla v},
\end{align*}
and by the identity in~\eqref{eq: m-fold 4},
\[
\Mult^m\FI^\alpha\nabla u=\FI^\alpha\Mult^m\nabla u
	+\sum_{j=0}^{m-1}\tilde d^{m,\alpha}_j\FI^{\alpha+m-j}\Mult^j\nabla u.
\]
Thus, multiplying both sides of~\eqref{eq: weak u} by~$t^m$ yields
\begin{multline*}
\iprod{\Mult^m u,v}+\iprod{\kappa\FI^\alpha\Mult^m\nabla u,\nabla v}
	+\sum_{j=1}^m\tilde d_j^{m,\alpha}
	\iprod{\kappa\FI^{\alpha+m-j}\Mult^j\nabla u),\nabla v}\\
	=\iprod{\Mult^m B^\alpha_{\vec F}u+\Mult^m B^1_{\vec G}u,\nabla v}
	-\iprod{\Mult^m\B^\alpha_a u+\Mult^m B^1_b u,v}
	+\iprod{\Mult^m f,v}
\end{multline*}
for $v\in H^1_0(\Omega)$.  We have
\[
\partial_t^m \FI^{\alpha+m-j}\Mult^j\nabla u
    =\partial_t^j\partial_t^{m-j}\FI^{m-j}\FI^\alpha\Mult^j\nabla u
    =\partial_t^j\FI^\alpha\Mult^j\nabla u
    =\FI^\alpha\partial_t^j\Mult^j\nabla u,
\]
where the final step follows by \eqref{eq: FI omega}  because
\[
\partial_t^i (\Mult^j\nabla u)(0)=0\quad\text{for~$0\le i\le j-1\le m-1$.}
\]
Likewise, $\partial_t^m\FI^\alpha\Mult^m\nabla u
=\FI^\alpha\partial_t^m\Mult^m\nabla u$ because
\[
\partial_t^j(\Mult^m\nabla u)(0)=0\quad\text{for~$0\le j\le m-1$,}
\]
and therefore
\begin{multline}\label{eq: dervi m}
\iprod{\partial_t^m\Mult^m u,v}
	+\iprod{\kappa\FI^\alpha\partial_t^m\Mult^m\nabla u,\nabla v}=
	\iprod{B^{\alpha,m}_{\vec F}u+B^{1,m}_{\vec G}u,\nabla v}\\
	-\iprod{B^{\alpha,m}_a u+B^{1,m}_b u,v}
-\sum_{j=0}^{m-1}\tilde d^{m,\alpha}_j
	\iprod{\kappa\FI^\alpha\partial_t^j\Mult^j \nabla u,\nabla v}
	+\iprod{\partial_t^m\Mult^m f,v}.
\end{multline}
We let $\E(u)=2\|B^{\alpha,m}_{\vec F}u\|^2+\|B^{\alpha,m}_a u\|^2
+2\|B^{1,m}_{\vec G}u\|^2+\|B^{1,m}_b u\|^2$, and conclude using 
the Cauchy--Schwarz inequality that
\begin{multline*}
\iprod{\partial_t^m\Mult^m u,v}
    +\iprod{\kappa\FI^\alpha\partial_t^m\Mult^m\nabla u,\nabla v}
    \le\E(u)+C\sum_{j=0}^{m-1} \|\FI^\alpha\partial_t^j\Mult^j \nabla u\|^2\\
	+\tfrac{1}{2}\|\nabla v\|^2+\tfrac12\|v\|^2+\iprod{\partial_t^m\Mult^m f,v}.
\end{multline*}
Choosing $v=\FI^\alpha\partial_t^m\Mult^m u$, integrating over the
time interval~$(0,t)$ and using \eqref{eq: min eig}, we have
\begin{multline*}
\Q_1^{\alpha,m}(u,t)
	+\tfrac12\Q_2^{\alpha,m}(\nabla u,t)
\le\int_0^t\E(u)\,ds+C\sum_{j=0}^{m-1}\Q_2^{\alpha,j}(\nabla u,t)\\
	+\Q^{\alpha,m}_2(u,t)
+\int_0^t\iprod{\partial_t^m\Mult^m f,\FI^\alpha\partial_t^m\Mult^m u}\,ds,
\end{multline*}
and by the Cauchy-Schwarz inequality and Lemma~\ref{lem: D},
\[
\FI^{1-\alpha}\big(\|\FI^\alpha(\partial_t^m (\Mult^m u))\|^2\big)(t)\le 
2\Q_1^\alpha(\partial_t^m (\Mult^m u),t).
\]
Thus,
\begin{align*}
&\int_0^t\iprod{\partial_t^m\Mult^m f,
	\FI^\alpha\partial_t^m\Mult^m u}\,ds
	\le\int_0^t\|\partial_t^m\Mult^m f\|\,
	\|\FI^\alpha\partial_t^m\Mult^m u\|\,ds\\
&\le C\biggl(\int_0^t(t-s)^\alpha\|\partial_t^m\Mult^m f\|^2\,ds\biggr)^{1/2}
	\biggl(\int_0^t(t-s)^{-\alpha}
	\|\FI^\alpha\partial_t^m\Mult^m u\|^2\,ds\biggr)^{1/2}\\
&\le C\Big(t^\alpha\Q^{0,m}(f,t)\Big)^{1/2}
\Big(\FI^{1-\alpha}(\|\FI^\alpha(\partial_t^m\Mult^m u)\|^2)(t)\Big)^{1/2}\\
&\le Ct^\alpha\Q^{0,m}(f,t)+\tfrac12\Q^{\alpha,m}_1(u,t),
\end{align*}
implying that the function
$\mathsf{q_m}(t)=\Q_1^{\alpha,m}(u,t)+\Q_2^{\alpha,m}(\nabla u,t)$ satisfies
\[
\mathsf{q_m}(t)\le2\int_0^t\E(u)\,ds
	+C\sum_{j=0}^{m-1}\Q^{\alpha,j}_2(\nabla u,t)+Ct^\alpha\Q^{0,m}(f,t).
\]
By Lemma \ref{lem: B integrals},
\begin{equation}\label{eq: E1u}
\Q^0(B^{\alpha,m}_{\vec F}u,t)+\Q^0(B^{\alpha,m}_{a}u,t)
	\le C\sum_{j=0}^m\Q^{\alpha,j}_2(u,t)
\end{equation}
and, applying Lemma \ref{lem: E} with $\nu=1$~and $\mu=\alpha$,
\begin{equation}\label{eq: E2u}
\Q^0(B^{1,m}_{\vec G}u,t)+\Q^0(B^{1,m}_b u,t)\le C\sum_{j=0}^m\Q_2^{1,j}(u,t)
	\le Ct^{2(1-\alpha)}\sum_{j=0}^m\Q_2^{\alpha,j}(u,t)
\end{equation}
for $m\ge0$. By combining the above estimates,
\[
\mathsf{q}_m(t)\le C\Q_2^{\alpha,m}(u,t)+ C\sum_{j=0}^{m-1}\mathsf{q}_j(t)
	+Ct^\alpha\Q^{0,m}(f,t).
\]
Consequently, we conclude (recursively) that
\[
\mathsf{q}_m(t)\le C\sum_{j=0}^m\Q_2^{\alpha,j}(u,t)
	+Ct^{\alpha}\sum_{j=0}^m\Q^{0,j}(f,t),
\]
so, by applying the first inequality in Lemma \ref{lem: D} with~$\phi=(\Mult^j u)^{(j)}$,
\[
\mathsf{q}_m(t)\le Ct^\alpha\sum_{j=0}^m\Q^{0,j}(f,t)
	+C\sum_{j=0}^m\int_0^t\omega_\alpha(t-s)\mathsf{q}_j(s)\,ds.
\]
Therefore,  a repeated application of Lemma~\ref{lem: Gronwall} yields the first
desired estimate.

To show the second estimate, choose $v=\partial_t^m\Mult^m u$
in~\eqref{eq: dervi m} and obtain
\begin{multline*}
\|\partial_t^m\Mult^m u\|^2+\iprod{\kappa\FI^\alpha\partial_t^m\Mult^m \nabla u,
	\partial_t^m\Mult^m\nabla u}
	=-\iprod{Eu,\partial_t^m\Mult^m u}\\
	-\sum_{j=0}^{m-1}\tilde d^{m,\alpha}_j
		\iprod{\kappa\FI^\alpha\partial_t^j\Mult^j \nabla u,
			\partial_t^m\Mult^m\nabla u}
	+\iprod{\partial_t^m\Mult^m f,\partial_t^m\Mult^m u},
\end{multline*}
where $Eu=\nabla\cdot B_{\vec F}^{\alpha,m}u+B_{a}^{\alpha,m}u
+\nabla\cdot B_{\vec G}^{1,m}u+B^{1,m}_b u$. The first and the last
terms on the right-hand side are bounded by
$\|Eu\|^2+\|\partial_t^m\Mult^m f\|^2+\tfrac{1}{2}\|\partial_t^m\Mult^m u\|^2$
so, after integrating in time, using \eqref{eq: min eig} and applying 
\eqref{eq: A} (for a sufficiently \emph{large}~$\epsilon$),
\begin{multline*}
\tfrac12\Q^{0,m}(u,t)+\Q^{\alpha,m}_1(\nabla u,t)
	\le\int_0^t\|Eu(s)\|^2\,ds+\Q^{0,m}(f,t)
	+\tfrac12\Q^{\alpha,m}_1(\nabla u,t)\\
	+C\sum_{j=0}^{m-1}\Q^{\alpha,j}_1(\nabla u,t).
\end{multline*}
Since $\nabla\cdot(\vec F\partial_t^{1-\alpha}u)
=(\nabla\cdot\vec F)\partial_t^{1-\alpha}u
+\vec F\cdot\nabla\partial_t^{1-\alpha}u$, we see that
\[
\nabla\cdot B^{\alpha,m}_{\vec F}u=\partial_t^m\Mult^m\FI^1\bigl(
\nabla\cdot(\vec F\partial_t^{1-\alpha}u)\bigr)
	=B^{\alpha,m}_{\nabla\cdot\vec F}u+B^{\alpha,m}_{\vec F\cdot{}}\nabla u,
\]
and therefore, applying Lemma \ref{lem: B integrals} followed by
Lemma \ref{lem: E},
\begin{align*}
\int_0^t\|Eu(s)\|^2\,ds&\le4\Bigl(\Q^0(\nabla\cdot B_{\vec F}^{\alpha,m}u,t)
	+\Q^0(B_{a}^{\alpha,m}u,t)\\
	&\qquad{}+\Q^0(\nabla\cdot B_{\vec G}^{1,m}u,t)+\Q^0(B^{1,m}_b u,t)\Bigr)\\
	&\le C\sum_{j=0}^m\Bigl(\Q^{\alpha,j}_2(u,t)
		+\Q^{\alpha,j}_2(\nabla u,t)\Bigr).
\end{align*}
Hence, the function $\mathsf{q}_m(t)=\Q^{0,m}(u,t)+\Q^{\alpha,m}_1(\nabla u,t)$
satisfies
\[
\mathsf{q}_m(t)\le2\Q^{0,m}(f,t)
    +C\sum_{j=0}^{m-1}\Q^{\alpha,j}_1(\nabla u, t)
	+C\sum_{j=0}^m\Bigl(\Q^{\alpha,j}_2(u,t)
		+\Q^{\alpha,j}_2(\nabla u,t)\Bigr),
\]
and so, using \eqref{eq: B}~and \eqref{eq: C}, it follows that
\[
\mathsf{q}_m(t)\le2\Q^{0,m}(f,t)+C\sum_{j=0}^{m-1}\mathsf{q}_j(t)
	+C\Bigl(\Q^{\alpha,m}_2(u,t)+\Q^{\alpha,m}_2(\nabla u,t)\Bigr).
\]
By the first inequality in Lemma \ref{lem: D} and \eqref{eq: C},
\[
\Q^{\alpha,m}_2(u,t)+\Q^{\alpha,m}_2(\nabla u,t)
	\le C\int_0^t\omega_\alpha(t-s)\mathsf{q}_m(s)\,ds,
\]
and thus by Lemma \ref{lem: Gronwall},
\[
\mathsf{q}_m(t)\le C\Q^{0,m}(f,t)+C\sum_{j=0}^{m-1}\mathsf{q}_j(t).
\]
Applying this inequality recursively gives
\[
\mathsf{q}_m(t)\le C\sum_{j=0}^m\Q^{0,j}(f,t),
\]
which completes the proof.
\end{proof}

We can now show pointwise bounds for the norms in~$L_2(\Omega)$ of the time
derivatives of $u$~and $\nabla u$.

\begin{theorem}\label{thm: reg}
For $m\ge1$ and $0<t\le T$,
\[
\|(\partial_t^m u)(t)\|^2+t^\alpha\|(\partial_t^m\nabla u)(t)\|^2\le
Ct^{-1-2m}\sum_{j=0}^{m+1}\Q^{0,j}(f,t).
\]
\end{theorem}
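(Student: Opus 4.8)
The plan is to argue by induction on $m$, using the third inequality of Lemma~\ref{lem: D} to convert a time-integrated bound into a pointwise-in-time bound, and Lemma~\ref{lem: stability C} to control the resulting integral. The key observation is that Lemma~\ref{lem: D} should be applied to $\phi=(\Mult^{m+1}u)^{(m)}$ (and to $\phi=(\Mult^{m+1}\nabla u)^{(m)}$): by the commutator identity~\eqref{eq: m-fold 1} the leading term of $(\Mult^{m+1}u)^{(m)}$ is $t^{m+1}\partial_t^m u$, so a pointwise bound on $(\Mult^{m+1}u)^{(m)}$ becomes a pointwise bound on $\partial_t^m u$ after the lower-order terms are absorbed. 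This is exactly why Lemma~\ref{lem: stability C} must be invoked at order $m+1$ (legitimate under our standing smoothness assumptions), and why the sum on the right-hand side runs up to $m+1$.

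Concretely, I would first record the two pointwise estimates. After verifying the hypotheses $\phi(0)=\FI^\alpha\phi'(0)=0$, Lemma~\ref{lem: D} gives $\|(\Mult^{m+1}u)^{(m)}(t)\|^2\le 2\omega_{2-\alpha}(t)\,\Q^{\alpha,m+1}_1(u,t)$; since $\omega_{2-\alpha}(t)=Ct^{1-\alpha}$ and the first estimate of Lemma~\ref{lem: stability C} (at order $m+1$) yields $\Q^{\alpha,m+1}_1(u,t)\le Ct^\alpha\sum_{j=0}^{m+1}\Q^{0,j}(f,t)$, this produces $\|(\Mult^{m+1}u)^{(m)}(t)\|^2\le Ct\sum_{j=0}^{m+1}\Q^{0,j}(f,t)$. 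Running the same argument for $\nabla u$, but using instead the second estimate $\Q^{\alpha,m+1}_1(\nabla u,t)\le C\sum_{j=0}^{m+1}\Q^{0,j}(f,t)$, gives $\|(\Mult^{m+1}\nabla u)^{(m)}(t)\|^2\le Ct^{1-\alpha}\sum_{j=0}^{m+1}\Q^{0,j}(f,t)$.

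Next I would use~\eqref{eq: m-fold 1} to write $t^{m+1}\partial_t^m u=(\Mult^{m+1}u)^{(m)}-\sum_{j=1}^m a^{m+1,m}_j\,t^{m+1-j}\partial_t^{m-j}u$, and similarly for $\nabla u$. Taking norms and squaring, the leading terms are controlled by the two estimates just obtained, while each lower-order term $t^{m+1-j}\partial_t^{m-j}u=t^{k+1}\partial_t^k u$ (with $k=m-j<m$) is controlled by the induction hypothesis: from $\|\partial_t^k u\|^2\le Ct^{-1-2k}\sum_{i=0}^{k+1}\Q^{0,i}$ one gets $t^{2(k+1)}\|\partial_t^k u\|^2\le Ct\sum_{i=0}^{m+1}\Q^{0,i}$, and from $t^\alpha\|\partial_t^k\nabla u\|^2\le Ct^{-1-2k}\sum_{i=0}^{k+1}\Q^{0,i}$ one gets $t^{2(k+1)}\|\partial_t^k\nabla u\|^2\le Ct^{1-\alpha}\sum_{i=0}^{m+1}\Q^{0,i}$. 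Hence $t^{2(m+1)}\|\partial_t^m u\|^2\le Ct\sum_{j=0}^{m+1}\Q^{0,j}(f,t)$ and $t^{2(m+1)}\|\partial_t^m\nabla u\|^2\le Ct^{1-\alpha}\sum_{j=0}^{m+1}\Q^{0,j}(f,t)$; dividing by $t^{2(m+1)}$ and weighting the gradient term by $t^\alpha$ yields the claim. The base case $m=0$ is the same computation with an empty lower-order sum (and is consistent with part~2 of Theorem~\ref{thm: existence}).

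The hard part will be the verification of the initial conditions $\phi(0)=\FI^\alpha\phi'(0)=0$ demanded by Lemma~\ref{lem: D}. Since $\Mult^{m+1}u=t^{m+1}u$ vanishes together with its first $m$ time-derivatives at $t=0$, the condition $\phi(0)=0$ is immediate from~\eqref{eq: m-fold 1}; the condition $\FI^\alpha\phi'(0)=0$ requires rewriting $\FI^\alpha(\Mult^{m+1}u)^{(m+1)}=\partial_t^{m+1}\FI^\alpha\Mult^{m+1}u$ via~\eqref{eq: FI omega}, where the boundary terms drop out precisely because those derivatives vanish, and then checking the limit as $t\to0$. Apart from this, the only remaining care is the exact bookkeeping of the powers of $t$, which—as the computation above shows—is what makes the factor $t^{-1-2m}$ and the weight $t^\alpha$ emerge cleanly.
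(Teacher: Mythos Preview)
Your argument is correct and uses the same two ingredients as the paper---the pointwise inequality from Lemma~\ref{lem: D} and the integrated bounds of Lemma~\ref{lem: stability C}---but the organization differs. You apply Lemma~\ref{lem: D} once, to $\phi=(\Mult^{m+1}u)^{(m)}$, and then peel off the lower-order pieces of $\partial_t^m\Mult^{m+1}u$ via \eqref{eq: m-fold 1} and an induction on~$m$. The paper instead rewrites $\Mult^{m+1}\partial_t^m$ directly as a linear combination of the operators $\partial_t^{j-1}\Mult^j$ for $1\le j\le m+1$ (using $\Mult\partial_t^m=\partial_t^m\Mult-m\partial_t^{m-1}$ together with \eqref{eq: m-fold 2}), and then applies Lemma~\ref{lem: D} to each term $\phi=\partial_t^{j-1}\Mult^ju$ separately, bounding $\Q_1^\alpha(\phi',t)=\Q_1^{\alpha,j}(u,t)$ by Lemma~\ref{lem: stability C} at order~$j$. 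This yields the estimate in one shot, without induction and without needing to establish the $m=0$ case as a separate base step. Your inductive route is equally valid and the bookkeeping of the $t$-powers comes out identically; the paper's decomposition is just a little more economical since it never has to isolate and re-absorb the lower-order remainder.
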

\begin{proof}
Since $\Mult\partial_t^m=\partial_t^m\Mult-m\partial_t^{m-1}$, we see using
\eqref{eq: m-fold 2} (and setting $\tilde b^{m,m-1}_m=0$) that
\[
\Mult^{m+1}\partial_t^m=\Mult^m\partial_t^m\Mult-m\Mult^m\partial_t^{m-1}
	=\sum_{j=1}^{m+1}\bigl(\tilde b^{m,m}_{j-1}-m\tilde b^{m,m-1}_{j-1}\bigr)
	\partial_t^{j-1}\Mult^j,
\]
and hence
\begin{equation}\label{eq: Mult m+1 p m}
\|(\Mult^{m+1}\partial_t^mu)(t)\|^2\le C\sum_{j=1}^{m+1}
	\|(\partial_t^{j-1}\Mult^ju)(t)\|^2.
\end{equation}
Using the second inequality in Lemma \ref{lem: D} with~$\phi=\partial_t^{j-1}\Mult^j u$
and the first bound in Lemma \ref{lem: stability C}, we get
\[
\|(\partial_t^{j-1}\Mult^ju)(t)\|^2
	\le Ct^{1-\alpha}\Q^\alpha_1(\partial_t^j\Mult^ju,t)
	\le Ct\sum_{\ell=0}^j\Q^{0,\ell}(f,t)
\]
and so
\[
\|(\partial_t^mu)(t)\|^2=t^{-2m-2}\|(\Mult^{m+1}\partial_t^mu)(t)\|^2
	\le Ct^{-1-2m}\sum_{j=0}^{m+1}\Q^{0,j}(f,t).
\]
Applying the same argument to~$\nabla u$ in place of~$u$, and using the second
bound in Lemma \ref{lem: stability C}, the result follows.
\end{proof}

Next, we estimate \emph{fractional} time derivatives of $u$~and $\nabla u$.  
These bounds will later help in our study of spatial regularity, and reflect the
presence of the fractional time derivative in~\eqref{eq: FPDE}.

\begin{theorem}\label{thm: frac reg}
For $m\ge1$ and $0<t\le T$,
\[
\|(\partial_t^{m-\alpha}u)(t)\|^2
	+t^{\alpha}\|(\partial_t^{m-\alpha}\nabla u)(t)\|^2
	\le Ct^{-1-2(m-\alpha)}\sum_{j=0}^{m+1}\Q^{0,j}(f,t).
\]
\end{theorem}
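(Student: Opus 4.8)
The plan is to imitate the proof of Theorem~\ref{thm: reg}, but with the integer derivative $\partial_t^m$ replaced by $\partial_t^{m-\alpha}=\partial_t^m\FI^\alpha$ and the fractional integral $\FI^\alpha$ threaded through every step. First I would multiply by the weight $t^{m+1}$ and expand exactly as in~\eqref{eq: Mult m+1 p m}, applying the commutator formula~\eqref{eq: m-fold 2} to the function $\FI^\alpha u$: since $\Mult^{m+1}\partial_t^m(\FI^\alpha u)=\sum_{j=1}^{m+1}(\tilde b^{m,m}_{j-1}-m\tilde b^{m,m-1}_{j-1})\partial_t^{j-1}\Mult^j(\FI^\alpha u)$, this gives $\|(\Mult^{m+1}\partial_t^{m-\alpha}u)(t)\|^2\le C\sum_{j=1}^{m+1}\|(\partial_t^{j-1}\Mult^j\FI^\alpha u)(t)\|^2$, together with the analogous bound for $\nabla u$. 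The remaining task is then a pointwise estimate of each term $\|(\partial_t^{j-1}\Mult^j\FI^\alpha u)(t)\|^2$.

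For this I would apply the pointwise (third) inequality of Lemma~\ref{lem: D} with $\phi=\partial_t^{j-1}\Mult^j\FI^\alpha u$; the hypotheses $\phi(0)=0$ and $\FI^\alpha\phi'(0)=0$ hold just as in the proof of Lemma~\ref{lem: stability C}, the extra factor $\FI^\alpha$ only raising the order of vanishing at the origin, so that $\|(\partial_t^{j-1}\Mult^j\FI^\alpha u)(t)\|^2\le Ct^{1-\alpha}\Q_1^\alpha(\partial_t^j\Mult^j\FI^\alpha u,t)$ because $\omega_{2-\alpha}(t)=Ct^{1-\alpha}$. Next I would push the $\FI^\alpha$ to the outside: by~\eqref{eq: m-fold 4}, the identity $\partial_t^i\FI^i=\mathrm{id}$, and the vanishing of the relevant initial values, one finds $\partial_t^j\Mult^j\FI^\alpha u=\sum_{\ell=0}^j c_{j,\ell}\,\FI^\alpha\partial_t^\ell\Mult^\ell u$ for suitable constants $c_{j,\ell}$. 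Since $\Q_1^\alpha(\cdot,t)$ is a nonnegative quadratic form by~\eqref{eq: Q1 Plancherel}, this yields $\Q_1^\alpha(\partial_t^j\Mult^j\FI^\alpha u,t)\le C\sum_{\ell=0}^j\Q_1^\alpha(\FI^\alpha\partial_t^\ell\Mult^\ell u,t)$.

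The crux is then the single-exponent estimate $\Q_1^\alpha(\FI^\alpha\psi,t)\le Ct^{2\alpha}\Q_1^\alpha(\psi,t)$, which I would obtain by chaining~\eqref{eq: C} and~\eqref{eq: B}: indeed $\Q_1^\alpha(\FI^\alpha\psi,t)\le 2t^\alpha\Q^0(\FI^\alpha\psi,t)=2t^\alpha\Q_2^\alpha(\psi,t)\le Ct^{2\alpha}\Q_1^\alpha(\psi,t)$. Taking $\psi=\partial_t^\ell\Mult^\ell u$ turns the right-hand side into $\Q_1^{\alpha,\ell}(u,t)$, which the first bound of Lemma~\ref{lem: stability C} controls by $Ct^\alpha\sum_{k=0}^{m+1}\Q^{0,k}(f,t)$. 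Collecting the powers $t^{1-\alpha}\cdot t^{2\alpha}\cdot t^\alpha=t^{1+2\alpha}$ and dividing by $t^{2(m+1)}$ gives $\|(\partial_t^{m-\alpha}u)(t)\|^2\le Ct^{-1-2(m-\alpha)}\sum_{k=0}^{m+1}\Q^{0,k}(f,t)$. For the gradient term I would run the identical argument with $\nabla u$ in place of $u$, now invoking the second bound of Lemma~\ref{lem: stability C}, so that $\Q_1^{\alpha,\ell}(\nabla u,t)\le C\sum_k\Q^{0,k}(f,t)$ with no extra factor $t^\alpha$; the missing power is exactly supplied by the weight $t^\alpha$ multiplying $\|\partial_t^{m-\alpha}\nabla u\|^2$ on the left.

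I expect the main obstacle to be obtaining a genuinely \emph{pointwise} bound on $\|\FI^\alpha\partial_t^\ell\Mult^\ell u(t)\|$: the natural relation $\FI^{1-\alpha}(\|\FI^\alpha\phi\|^2)\le 2\Q_1^\alpha(\phi,\cdot)$ from Lemma~\ref{lem: D} only controls a fractional \emph{average} of $\|\FI^\alpha\phi\|^2$ and cannot be inverted to a pointwise statement. The device that circumvents this is to keep $\FI^\alpha$ \emph{inside} the argument of the pointwise inequality of Lemma~\ref{lem: D} and to strip it off only afterwards, through the quadratic-form estimate $\Q_1^\alpha(\FI^\alpha\psi,t)\le Ct^{2\alpha}\Q_1^\alpha(\psi,t)$. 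This is precisely what produces the improved exponent $-1-2(m-\alpha)$ in place of $-1-2m$, and it must be arranged before the initial-value conditions for Lemma~\ref{lem: D} are verified.
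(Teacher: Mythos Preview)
Your argument is correct and follows a genuinely different route from the paper's. The paper does \emph{not} stay inside the $\Q^\alpha_1$ framework: it writes $\partial_t^{m-\alpha}=\partial_t^{m-1}\partial_t^{1-\alpha}$, uses the weight $\Mult^m$ rather than $\Mult^{m+1}$, exploits the identity $\Mult\partial_t^{1-\alpha}u=\FI^\alpha(\Mult u'+\alpha u)$, and reduces everything to a \emph{pointwise} bound on $\|\FI^\alpha\phi_\ell(t)\|$ with $\phi_\ell=\partial_t^\ell\Mult^\ell(\Mult u'+\alpha u)$. That bound is obtained by first invoking Theorem~\ref{thm: reg} to get $\|\phi_\ell(t)\|\le C\omega_{1/2}(t)\psi_\ell(t)$ with $\psi_\ell$ nondecreasing, and then observing that convolving $\omega_\beta\cdot(\text{nondecreasing})$ with $\omega_\alpha$ yields $C\omega_{\alpha+\beta}\cdot(\text{nondecreasing})$. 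By contrast, you bypass Theorem~\ref{thm: reg} altogether: you apply the pointwise part of Lemma~\ref{lem: D} with the extra $\FI^\alpha$ still inside, land on $\Q_1^\alpha(\FI^\alpha\psi,t)$, and then strip the $\FI^\alpha$ via the chain
\[
\Q_1^\alpha(\FI^\alpha\psi,t)\le 2t^\alpha\Q^0(\FI^\alpha\psi,t)
     =2t^\alpha\Q_2^\alpha(\psi,t)\le Ct^{2\alpha}\Q_1^\alpha(\psi,t)
\]
from~\eqref{eq: C} and~\eqref{eq: B}, so that Lemma~\ref{lem: stability C} applies directly. The paper's route is modular, recycling the integer-order result; yours is more self-contained and avoids the somewhat ad~hoc ``nondecreasing majorant'' device. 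One small bookkeeping point: your decomposition calls for $\Q_1^{\alpha,\ell}(u,t)$ and $\Q_1^{\alpha,\ell}(\nabla u,t)$ also at $\ell=0$, whereas Lemma~\ref{lem: stability C} is stated only for $m\ge1$; the $\ell=0$ case is the basic a~priori estimate behind Theorem~\ref{thm: existence} (and is in fact the base of the recursion inside the proof of Lemma~\ref{lem: stability C}), so this is not a genuine gap.
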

\begin{proof}
Using the inequality~\eqref{eq: Mult m+1 p m},
\begin{equation}\label{eq: frac reg A}
\begin{aligned}
\bigl\|(\Mult^m\partial_t^{m-\alpha}u)(t)\bigr\|^2
	&=\bigl\|(\Mult^m\partial_t^{m-1}\partial_t^{1-\alpha}u)(t)\bigr\|^2
	\le C\sum_{j=1}^m\bigl\|(
		\partial_t^{j-1}\Mult^j\partial_t^{1-\alpha}u)(t)\bigr\|^2,
\end{aligned}
\end{equation}
and using \eqref{eq: commutator}~and \eqref{eq: FI omega} (with $m=1$),
\begin{align*}
\Mult\partial_t^{1-\alpha}u&=\Mult\partial_t\FI^\alpha u
	=\Mult\bigl(\FI^\alpha\partial_tu+u(0)\omega_\alpha\bigr)\\
	&=(\FI^\alpha\Mult+\alpha\FI^{\alpha+1})\partial_tu+u(0)\Mult\omega_\alpha\\
	&=\FI^\alpha\Mult u'+\alpha\FI^\alpha\bigl(u-u(0)\bigr)
	+\alpha u(0)\omega_{1+\alpha}
	=\FI^\alpha(\Mult u'+\alpha u).
\end{align*}
Thus, by \eqref{eq: m-fold 4},
\[
\Mult^j\partial_t^{1-\alpha}u=\Mult^{j-1}\FI^{\alpha}(\Mult u'+\alpha u)
	=\sum_{\ell=0}^{j-1}\tilde d^{j-1,\alpha}_\ell
	\FI^{\alpha+j-1-\ell}\Mult^\ell(\Mult u'+\alpha u).
\]
We have
\begin{multline*}
\partial_t^{j-1}\FI^{\alpha+j-1-\ell}\Mult^\ell(\Mult u'+\alpha u)
	=\partial_t^\ell\bigl(\partial_t^{j-1-\ell}\FI^{j-1-\ell}\bigr)
		\FI^\alpha\Mult^\ell(\Mult u'+\alpha u)\\
	=\partial_t^\ell\FI^\alpha\Mult^\ell(\Mult u'+\alpha u)
	=\FI^\alpha\partial_t^\ell\Mult^\ell(\Mult u'+\alpha u),
\end{multline*}
where we used the identity~\eqref{eq: FI omega} (with $m=1$) and the fact
that $\partial_t^i\Mult^\ell(\Mult u'+\alpha u)(0)=0$
for $0\le i\le\ell-1$.  Hence,
\[\bigl\|(\partial_t^{j-1}\Mult^j\partial_t^{1-\alpha}u)(t)\bigr\|
	=\biggl\|\sum_{\ell=0}^{j-1}\tilde d^{j-1,\alpha}_\ell
	\FI^{\alpha}\partial_t^\ell\Mult^\ell(\Mult u'+\alpha u)(t)\biggr\|
	\le C\sum_{\ell=0}^{j-1}\|\FI^\alpha\phi_\ell(t)\|\]
where $\phi_\ell=\partial_t^\ell\Mult^\ell(\Mult u'+\alpha u)$.
Using \eqref{eq: m-fold 1},
\[
\phi_\ell
	=\sum_{i=0}^\ell\tilde a^{\ell,\ell}_i\Mult^i\partial_t^i(\Mult u'+\alpha u)
	=\sum_{i=0}^\ell\tilde a^{\ell,\ell}_i\Mult^i
	\bigl(\Mult\partial_t^iu'+i\partial_t^{i-1}u'+\alpha\partial_t^i u\bigr)
\]
and so, by Theorem~\ref{thm: reg},
\begin{equation}\label{eq: frac reg B}
\|\phi_\ell(t)\|^2
	\le C\sum_{r=0}^{\ell+1}\|(\Mult^r\partial_t^ru)(t)\|^2
	\le C\sum_{r=0}^{\ell+1}t^{-1}\sum_{i=0}^{r+1}\Q^{0,i}(f,t)
	\le Ct^{-1}\sum_{r=0}^{\ell+2}\Q^{0,r}(f,t).
\end{equation}
Since $\|\phi_\ell(t)\|\le C\omega_{1/2}(t)\psi_\ell(t)$ where
$\psi_\ell(t)=\sqrt{\sum_{r=0}^{\ell+2}\Q^{0,r}(f,t)}$ is nondecreasing,
we see that $\|\FI^\alpha\phi_\ell(t)\|\le C\omega_{\alpha+1/2}(t)\psi_\ell(t)$.
Therefore,
\begin{align*}
\bigl\|(\partial_t^{j-1}\Mult^j\partial_t^{1-\alpha}u)(t)\bigr\|^2
	&\le C\sum_{\ell=0}^{j-1}\|\FI^\alpha\phi_\ell(t)\|^2
	\le C\sum_{\ell=0}^{j-1}(t^{(\alpha+1/2)-1})^2\psi_\ell(t)^2\\
	&\le Ct^{2\alpha-1}\sum_{\ell=0}^{j+1}\Q^{0,\ell}(f,t),
\end{align*}
and the desired bound for $\|(\partial_t^{m-\alpha}u)(t)\|^2$ follows at once
from~\eqref{eq: frac reg A}.

Replacing $u$ with~$\nabla u$ in the preceding argument, we have
\[\bigl\|(\Mult^m\partial_t^{m-\alpha}\nabla u)(t)\bigr\|^2
	\le C\sum_{j=1}^m\bigl\|(
		\partial_t^{j-1}\Mult^j\partial_t^{1-\alpha}\nabla u)(t)\bigr\|^2
	\le C\sum_{j=1}^m\sum_{\ell=0}^{j-1}\|\FI^\alpha\phi_\ell(t)\|^2\]
where, this time,
$\phi_\ell=\partial_t^\ell\Mult^\ell(\Mult\nabla u'+\alpha\nabla u)$ and hence
\[
\|\phi_\ell(t)\|\le C\omega_{(1-\alpha)/2}(t)\psi_\ell(t).  
\]
It follows that
$\|\FI^\alpha\phi_\ell\|\le C\omega_{(1+\alpha)/2}(t)\psi_\ell(t)$ and therefore
$t^\alpha\bigl\|(\Mult^m\partial_t^{m-\alpha}\nabla u)(t)\bigr\|^2$ is bounded
by
\[
Ct^\alpha\sum_{\ell=0}^{m-1}\|\FI^\alpha\phi_\ell(t)\|^2
	\le Ct^\alpha\sum_{\ell=0}^{m-1}(t^{(1+\alpha)/2-1})^2\psi_\ell(t)^2
	\le Ct^{2\alpha-1}\sum_{\ell=0}^{m+1}\Q^{0,\ell}(f,t),
\]
as required.
\end{proof}

The following simplified bounds are perhaps more immediately useful.

\begin{corollary}\label{cor: reg simple}
Let $m\ge1$ and suppose that $g:(0,T]\to L_2(\Omega)$ is $C^m$ with
\begin{equation}\label{eq: g Cm}
\|g^{(j)}(t)\|\le Mt^{\eta-1-j}\quad\text{for~$0\le j\le m$ and some $\eta>0$.}
\end{equation}
Then 
\[
\|(\partial_t^mu)(t)\|+t^{\alpha/2}\|(\partial_t^m\nabla u)(t)\|
	\le Ct^{-m}(\|u_0\|+Mt^\eta)
\]
and
\[
\|(\partial_t^{m-\alpha}u)(t)\|
	+t^{\alpha/2}\|(\partial_t^{m-\alpha}\nabla u)(t)\|
	\le Ct^{\alpha-m}(\|u_0\|+Mt^\eta).
\]
\end{corollary}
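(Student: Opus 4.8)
The plan is to reduce everything to a single estimate for the weighted quantities $\Q^{0,j}(f,t)$ and then simply feed it into Theorems~\ref{thm: reg} and~\ref{thm: frac reg}, which already express the pointwise derivative bounds in terms of $\sum_{j=0}^{m+1}\Q^{0,j}(f,t)$. Concretely, I would first prove the auxiliary bound
\[
\Q^{0,j}(f,t)\le Ct\bigl(\|u_0\|+Mt^\eta\bigr)^2\quad\text{for $0\le j\le m+1$.}
\]
Granting this, the first displayed inequality follows by substitution into Theorem~\ref{thm: reg}: the right-hand side becomes $Ct^{-1-2m}\cdot t(\|u_0\|+Mt^\eta)^2=Ct^{-2m}(\|u_0\|+Mt^\eta)^2$, and taking square roots (using $A+t^{\alpha/2}B\le2\sqrt{A^2+t^\alpha B^2}$ with $A=\|\partial_t^mu\|$, $B=\|\partial_t^m\nabla u\|$) gives the claim. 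The second inequality is obtained identically, with Theorem~\ref{thm: frac reg} in place of Theorem~\ref{thm: reg}, so that the exponent $t^{-1-2(m-\alpha)}\cdot t=t^{2(\alpha-m)}$ yields the factor $t^{\alpha-m}$ after the square root.

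It remains to establish the auxiliary bound, which is the only substantive part of the argument. Recalling $f=u_0+\FI^1g$, I would note that $f^{(k)}=g^{(k-1)}$ for $k\ge1$, while $f=u_0+\FI^1g$ for $k=0$; thus the hypothesis~\eqref{eq: g Cm} (with $j$ running up to $m$) controls $f^{(k)}$ for all $0\le k\le m+1$. Expanding the weighted derivative by the identity~\eqref{eq: m-fold 1} of Lemma~\ref{lem: m-fold}, $(\Mult^jf)^{(j)}=\partial_t^j\Mult^jf=\sum_{k=0}^j\tilde a^{j,j}_k\Mult^kf^{(k)}$, so that $\|(\Mult^jf)^{(j)}(s)\|\le C\sum_{k=0}^js^k\|f^{(k)}(s)\|$.

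The crucial observation is that each weight $s^k$ exactly cancels the temporal singularity of $f^{(k)}$. Indeed, for $k\ge1$ the bound $\|f^{(k)}(s)\|=\|g^{(k-1)}(s)\|\le Ms^{\eta-k}$ gives $s^k\|f^{(k)}(s)\|\le Ms^\eta$, while for $k=0$ the estimate $\|\FI^1g(s)\|\le\int_0^s\|g(\tau)\|\,d\tau\le CMs^\eta$ (where $\eta>0$ secures integrability) yields $\|f(s)\|\le\|u_0\|+CMs^\eta$. Hence $\|(\Mult^jf)^{(j)}(s)\|\le C(\|u_0\|+Ms^\eta)$, a bounded and nondecreasing function of $s$, and therefore
\[
\Q^{0,j}(f,t)=\int_0^t\|(\Mult^jf)^{(j)}(s)\|^2\,ds\le C\int_0^t(\|u_0\|+Ms^\eta)^2\,ds\le Ct(\|u_0\|+Mt^\eta)^2,
\]
as required.

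I do not anticipate a genuine obstacle; the work is essentially bookkeeping with the Leibniz-type expansion. The one point deserving care—and indeed the whole mechanism behind the paper's use of $\Mult^j$—is that the naive derivative $\|f^{(k)}(s)\|\le Ms^{\eta-k}$ is badly singular and need not be square-integrable near $s=0$, whereas the $\Mult^j$-weighting built into $\Q^{0,j}$ converts it into the harmless $s^\eta$. Recognizing that this weighted estimate is precisely what Theorems~\ref{thm: reg} and~\ref{thm: frac reg} consume is what makes both pointwise bounds drop out at once.
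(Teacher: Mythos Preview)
Your proposal is correct and follows essentially the same route as the paper: bound each $\Q^{0,j}(f,t)$ by expanding $(\Mult^jf)^{(j)}$ via \eqref{eq: m-fold 1}, observe that the weight $s^k$ cancels the singularity of $f^{(k)}$, and then feed the resulting estimate into Theorems~\ref{thm: reg} and~\ref{thm: frac reg}. If anything, your version is slightly more careful: the paper asserts $\|(\Mult^jf)^{(j)}(t)\|\le CMt^\eta$ for $j\ge1$, but the $k=0$ term of the Leibniz expansion contributes $j!\,f$ and hence an $\|u_0\|$ summand, exactly as you record; this does not affect the final bound.
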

\begin{proof}
Since $\|f^{(j)}(t)\|=\|g^{(j-1)}(t)\|\le Mt^{\eta-j}$ for~$1\le j\le m+1$,
\eqref{eq: m-fold 1} implies that
\[
\text{$\|(\Mult^j f)^{(j)}(t)\|\le CMt^\eta$ for $~1\le j\le m+1$,}
\]
with $\|f(t)\|\le\|u_0\|+M\eta^{-1}t^\eta$. Thus,
\[
\text{$\Q^{0,j}(f,t)\le CM^2t^{2\eta+1}$ for $1\le j\le m+1$,}
\]
with $\Q^0(f,t)\le Ct(\|u_0\|+Mt^\eta)^2$, so
\[
t^{-1-2m}\sum_{j=0}^{m+1}\Q^{0,j}(f,t)
	\le Ct^{-2m}(\|u_0\|+Mt^\eta)^2
\]
and the result follows from Theorems~\ref{thm: reg}~and \ref{thm: frac reg}.
\end{proof}

\section{More regular initial data}\label{sec: H2}
We will now investigate further the relation between the regularity 
of~$u$ and that of the initial data~$u_0$.  In particular, 
Theorem~\ref{thm: H2 reg} below extends Corollary~\ref{cor: reg simple} and 
proves a bound used in an error analysis of a finite element discretization of 
the fractional Fokker--Planck equation~\cite{LeMcLeanMustapha2018}. The 
fractional PDE~\eqref{eq: FPDE} can be rewritten as
\[
u'-\nabla\cdot(\kappa\partial_t^{1-\alpha}\nabla u)=h
	\quad\text{for $x\in\Omega$ and $0<t<T$,}
\]
where $h=g-\nabla\cdot\bigl(\vec F\partial_t^{1-\alpha}u+\vec Gu\bigr)
-(a\partial_t^{1-\alpha}u+b u)$. We can therefore apply known results for the 
fractional diffusion equation to establish the following bounds in the norm
$\|v\|_\mu=\|A^{\mu/2}v\|$ of the fractional Sobolev space~$\dot H^\mu(\Omega)$, 
where $A^{\mu/2}$ is defined via the spectral representation 
of~$Av=-\nabla\cdot(\kappa\nabla v)$ using the Dirichlet eigenfunctions 
on~$\Omega$~\cite{McLean2010,Thomee2006}.  The results of this section require 
$H^2$-regularity for the Poisson problem, and to ensure this property we make 
the additional assumptions~\cite[Theorems 2.2.2.3~and 3.2.1.2]{Grisvard2011}
\begin{equation}\label{eq: H2 kappa Omega}
\text{$\kappa$ is Lipschitz on~$\overline\Omega$}
\qquad\text{and}\qquad\text{$\Omega$ is $C^{1,1}$ or convex.}
\end{equation}
It follows that
$\dot H^1(\Omega)=H^1_0(\Omega)$~and
$\dot H^2(\Omega)=H^2(\Omega)\cap H^1_0(\Omega)$.  We also require that $g$ 
satisfies \eqref{eq: g Cm}.  Our first result does not assume any additional 
smoothness of~$u_0$.

\begin{theorem}\label{thm: smoothing}
Assume \eqref{eq: g Cm} ~and \eqref{eq: H2 kappa Omega}.  If 
$u_0\in L_2(\Omega)$, then
\[
t^m\|u^{(m)}(t)\|_\mu\le C\|u_0\|t^{-\mu\alpha/2}+CMt^{\eta-\mu\alpha/2}
	\quad\text{for $0\le\mu\le2$ and $0<t\le T$.}
\]
\end{theorem}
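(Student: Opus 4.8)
The plan is to exploit the reformulation given just above the theorem, namely $u'+A\partial_t^{1-\alpha}u=h$ with $Av=-\nabla\cdot(\kappa\nabla v)$ and $h=g-\nabla\cdot(\vec F\partial_t^{1-\alpha}u+\vec Gu)-(a\partial_t^{1-\alpha}u+bu)$, and to treat this as a \emph{pure} time-fractional diffusion equation whose source~$h$ is already under control thanks to Section~\ref{sec: regularity}. Writing the solution by variation of constants as $u(t)=E(t)u_0+\int_0^t\bar E(t-s)h(s)\,ds$, where $E$~and $\bar E$ are the solution operators of the fractional diffusion equation defined through the Mittag--Leffler function of~$A$ acting on the Dirichlet eigenfunction expansion, we may invoke the classical smoothing estimates~\cite{McLean2010,Thomee2006}. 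For the homogeneous part these give $\|E^{(m)}(t)u_0\|_\mu\le Ct^{-m-\mu\alpha/2}\|u_0\|$, which, after multiplying by~$t^m$, already accounts for the term $C\|u_0\|t^{-\mu\alpha/2}$.

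For the source, the key observation is that the time derivatives of~$h$ are controlled in~$L_2(\Omega)$ by the estimates of Section~\ref{sec: regularity}. Applying the Leibniz rule to the products in~$h$ and using the smoothness~\eqref{eq: reg coeff} of the coefficients, each $h^{(j)}$ reduces to derivatives of~$g$ together with the quantities $\partial_t^{k}u$, $\partial_t^{k}\nabla u$, $\partial_t^{k-\alpha}u$~and $\partial_t^{k-\alpha}\nabla u$; the divergence terms are handled by writing $\nabla\cdot(\vec F\partial_t^{1-\alpha}u)=(\nabla\cdot\vec F)\partial_t^{1-\alpha}u+\vec F\cdot\partial_t^{1-\alpha}\nabla u$. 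Corollary~\ref{cor: reg simple} (equivalently Theorems~\ref{thm: reg}~and \ref{thm: frac reg}) then bounds each such piece by a suitable negative power of~$t$ times $(\|u_0\|+Mt^\eta)$, so that combining with the smoothing of~$\bar E$ produces the second term $CMt^{\eta-\mu\alpha/2}$. It is essential here that Corollary~\ref{cor: reg simple} supplies bounds on the \emph{spatial gradients} $\partial_t^{k-\alpha}\nabla u$, since these are what guarantee $h^{(j)}\in L_2(\Omega)$.

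To reach the endpoint $\mu=2$ I would use the $H^2$-regularity~\eqref{eq: H2 kappa Omega}, which identifies $\dot H^2(\Omega)=H^2(\Omega)\cap H^1_0(\Omega)$ and provides the elliptic bound $\|v\|_{H^2}\le C\|Av\|$; the intermediate values $0<\mu<2$ then follow by the spectral interpolation inequality $\|A^{\mu/2}v\|\le\|v\|^{1-\mu/2}\|Av\|^{\mu/2}$, which converts the two endpoint bounds into $t^m\|u^{(m)}(t)\|_\mu\le[C(\|u_0\|+Mt^\eta)]^{1-\mu/2}[Ct^{-\alpha}(\|u_0\|+Mt^\eta)]^{\mu/2}=Ct^{-\mu\alpha/2}(\|u_0\|+Mt^\eta)$, which is exactly the claimed estimate. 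The $\mu=0$ endpoint is precisely Corollary~\ref{cor: reg simple}, and this interpolation structure is in fact visible in the form of the right-hand side.

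The main obstacle will be the top-order estimate $t^m\|u^{(m)}(t)\|_2=t^m\|Au^{(m)}(t)\|\le Ct^{-\alpha}(\|u_0\|+Mt^\eta)$. The difficulty is that applying~$A$ to the Duhamel integral produces a borderline-singular kernel behaving like $(t-s)^{-1}$ near $s=t$, so a naive bound on $\int_0^t A\bar E(t-s)h(s)\,ds$ just fails to converge; one must instead integrate by parts and bring in the derivatives $h^{(j)}$ (rather than $h$ alone) before applying the refined smoothing estimates for the fractional diffusion equation. Supplying these derivatives is exactly what the singular-in-time bounds of Section~\ref{sec: regularity} allow, and reconciling their blow-up as $t\to0$ with the smoothing of~$\bar E$ so as to land on the sharp exponent $\eta-\mu\alpha/2$ is the delicate point. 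I do not expect any further Gronwall argument to be required, since the needed \emph{a priori} control of~$u$ is already in hand; the reformulation serves only to transfer that control, through the fractional-diffusion smoothing operators, into the stronger spatial norm~$\|\cdot\|_\mu$.
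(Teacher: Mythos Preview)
Your plan is essentially the paper's strategy: rewrite the equation as pure fractional diffusion with source~$h$, control $h^{(j)}$ via Corollary~\ref{cor: reg simple}, and invoke the smoothing estimates of~\cite{McLean2010}. The only real difference is in the packaging. The paper does not separate the endpoints $\mu=0$~and $\mu=2$ and interpolate; instead it quotes directly from~\cite{McLean2010} the compound inequality
\[
t^m\|u^{(m)}(t)\|_\mu\le Ct^{-\mu\alpha/2}\|u_0\|
	+C\sum_{j=0}^m\int_0^t(t-s)^{-\mu\alpha/2}s^j\|h^{(j)}(s)\|\,ds,
\]
valid for all $0\le\mu\le2$ at once. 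This makes your worry about the borderline kernel disappear: the singularity is $(t-s)^{-\mu\alpha/2}$ with $\mu\alpha/2\le\alpha<1$, hence always integrable, so no integration by parts in the Duhamel integral is needed beyond what is already encoded in that inequality. From there the paper bounds $s^j\|h^{(j)}(s)\|\le C\|u_0\|s^{\alpha/2-1}+CMs^{\eta-1}$ exactly as you describe (Leibniz rule, $\nabla\cdot(\vec F\partial_t^{1-\alpha}u)=(\nabla\cdot\vec F)\partial_t^{1-\alpha}u+\vec F\cdot\partial_t^{1-\alpha}\nabla u$, then Corollary~\ref{cor: reg simple}) and evaluates the beta-function convolutions. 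Your interpolation route would also work, but it is a detour: the $\mu=2$ endpoint is not a genuine obstacle once you use the quoted inequality rather than a raw bound on~$A\bar E$.
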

\begin{proof}
We have \cite[Theorems 4.1 and 4.2, and the inequality stated after Theorem
5.4]{McLean2010}
\[
t^m \|u^{(m)}(t)\|_\mu\le C t^{-\mu\alpha/2}\|u_0\|
	+C\sum_{j=0}^m \int_0^t(t-s)^{-\mu\alpha/2}s^j \|h^{(j)}(s)\|\,ds
\]
for $m\ge 0$ and for $0\le \mu\le 2$, with
\begin{multline*}
\|h^{(j)}(s)\|\le\|\partial_t^j g(s)\|
	+C\sum_{\ell=0}^j\Big(\|\partial_t^{\ell+1-\alpha}\nabla u(s)\|
		+\|\partial_t^\ell\nabla u(s)\|\\
	+\|\partial_t^{\ell+1-\alpha}u(s)\|+\|\partial_t^\ell u(s)\|\Big).
\end{multline*}
Corollary \ref{cor: reg simple} shows that $\|h^{(j)}(s)\|$ is bounded by
\[
Ms^{\eta-1-j}+C\sum_{\ell=0}^j\Bigl(
	s^{\alpha/2-\ell-1}+s^{-\alpha/2-\ell}+s^{\alpha-\ell-1}+s^{-\ell}\Bigr)
	(\|u_0\|+Ms^{\eta})
\]
so $s^j\|h^{(j)}(s)\|\le C\|u_0\|s^{\alpha/2-1}+Ms^{\eta-1}$ and hence
\begin{align*}
\int_0^t(t-s)^{-\mu\alpha/2}s^j \|h^{(j)}(s)\|\,ds
	&\le C\|u_0\|(\omega_{1-\mu\alpha/2}*\omega_{\alpha/2})(t)+CM(\omega_{1-\mu\alpha/2}*\omega_{\eta})(t)\\
	&\le C\bigl(\|u_0\|t^{(1-\mu)\alpha/2}+Mt^{\eta-\mu\alpha/2}\bigr),
\end{align*}
completing the proof.
\end{proof}

Recall from part~5 of Theorem~\ref{thm: existence} that for 
any~$u_0\in L_2(\Omega)$ the solution~$u(t)$ converges weakly 
to~$u(0)=u_0$ in~$L_2(\Omega)$ as~$t\to0$. The first estimate in our next 
result shows that $u(t)\to u_0$ in the norm of~$L_2(\Omega)$ if we impose some 
additional spatial regularity on the initial data, namely if $u_0\in\dot 
H^\mu(\Omega)$ for some $\mu>0$.  The second and third estimates extend the 
results of Corollary \ref{cor: reg simple}.

\begin{theorem}\label{thm: u cts}
Assume \eqref{eq: g Cm}~and \eqref{eq: H2 kappa Omega}.  If 
$0\le\mu\le2$~and $u_0\in\dot H^\mu(\Omega)$, then
\[
\|u(t)-u_0\|+t^{\alpha/2}\bigl\|\nabla\bigl(u(t)-u_0\bigr)\bigr\|
	\le C\|u_0\|_\mu t^{\alpha\mu/2}+Mt^\eta,
\]
and, for~$m\ge1$,
\[
\|u^{(m)}(t)\|+t^{\alpha/2}\|\nabla u^{(m)}(t)\|
	\le Ct^{-m}\bigl(\|u_0\|_\mu t^{\alpha\mu/2}+Mt^\eta\bigr)
\]
with
\[
\|\partial_t^{m-\alpha}u(t)\|
	+t^{\alpha/2}\|(\partial_t^{m-\alpha}\nabla u)(t)\|
	\le Ct^{\alpha-m}\bigl(\|u_0\|_\mu t^{\alpha\mu/2}+Mt^\eta\bigr).
\]
\end{theorem}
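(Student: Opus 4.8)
The plan is to exploit the linearity of the problem together with the reformulation of~\eqref{eq: FPDE} as the fractional diffusion equation $u'-\nabla\cdot(\kappa\partial_t^{1-\alpha}\nabla u)=h$ introduced at the start of this section, in which the advection and reaction terms are absorbed into the source $h=g-\nabla\cdot(\vec F\partial_t^{1-\alpha}u+\vec Gu)-(a\partial_t^{1-\alpha}u+bu)$. First I would split $u=v+w$, where $w$ solves the problem with initial data $0$ and source $g$, and $v$ solves it with initial data $u_0$ and $g=0$. Corollary~\ref{cor: reg simple} applied to $w$ already delivers all of the $Mt^\eta$ contributions in the second and third estimates, so the task reduces to proving, for $v$,
\[
\|v^{(m)}(t)\|_\nu\le Ct^{-m+\alpha(\mu-\nu)/2}\|u_0\|_\mu
\qquad(\nu\in\{0,1\}),
\]
together with the analogous bound for $\partial_t^{m-\alpha}v$; since $\|\cdot\|_1$ is equivalent to $\|\nabla\cdot\|$ under~\eqref{eq: H2 kappa Omega}, these specialize to the claimed inequalities on taking $\nu=0$ and $\nu=1$.

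For $v$ I would use the representation behind Theorem~\ref{thm: smoothing}, but now with $\dot H^\mu$ initial data rather than merely $u_0\in L_2(\Omega)$. The homogeneous part of the fractional diffusion solution obeys the smoothing estimate $\|v^{(m)}_{\mathrm{hom}}(t)\|_\nu\le Ct^{-m+\alpha(\mu-\nu)/2}\|u_0\|_\mu$ for $|\nu-\mu|\le2$ (the relevant bound from~\cite{McLean2010}, whose extended range for $m\ge1$ reflects that the time derivatives of the Mittag--Leffler factor vanish as $t\to0$); this supplies exactly the desired main term, including the gradient case $\nu=1$ for every $0\le\mu\le2$. The contribution of the source $h$ would then be controlled by a Duhamel estimate of the form $\sum_{j=0}^m\int_0^t(t-s)^{-\nu\alpha/2}s^j\|h^{(j)}(s)\|\,ds$, exactly as in the proof of Theorem~\ref{thm: smoothing}.

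The delicate point, and the step I expect to be the main obstacle, is that $h$ is \emph{self-referential}: it involves $v$, $\partial_t^{1-\alpha}v$ and their gradients, so bounding $\|h^{(j)}(s)\|$ requires the very estimates we are trying to prove. For $0\le\mu\le1$ the crude bounds of Corollary~\ref{cor: reg simple} suffice, since then $s^j\|h^{(j)}(s)\|\le C\|u_0\|s^{\alpha/2-1}\le C\|u_0\|_\mu s^{\alpha/2-1}$ and the resulting integral is subordinate to the homogeneous term. For $1<\mu\le2$, however, this crude bound is too singular, and I would instead insert the \emph{sought} bounds for $v$ and $\partial_t^{1-\alpha}v$ into $h$; the advection and reaction terms then gain an extra factor $s^{\alpha/2}$ relative to the homogeneous rate, so that the Duhamel integral reproduces the target rate with a subordinate remainder. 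Closing this bootstrap at the sharp rate $t^{-m+\alpha(\mu-\nu)/2}$, without any loss and uniformly in $t$, is precisely where the fractional Gronwall inequality (Lemma~\ref{lem: Gronwall}) enters, applied to the coupled system for the integer- and fractional-order derivatives in the manner of Theorems~\ref{thm: reg} and~\ref{thm: frac reg}.

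Finally, the first estimate would follow from the second with $m=1$: since $u$ is continuous on $(0,T]$ with $u(0)=u_0$ by Theorem~\ref{thm: existence}, one has $u(t)-u_0=\int_0^t u'(s)\,ds$, and for $\mu>0$ the bound $\|u'(s)\|\le Cs^{-1+\alpha\mu/2}\|u_0\|_\mu+CMs^{\eta-1}$ is integrable near $s=0$, yielding $\|u(t)-u_0\|\le C\|u_0\|_\mu t^{\alpha\mu/2}+CMt^\eta$ (for $\mu=0$ the claim is just the boundedness already contained in Theorem~\ref{thm: existence}). The weighted gradient term $t^{\alpha/2}\|\nabla(u(t)-u_0)\|$ I would treat via the $m=0$ smoothing bound for $\nabla u(t)$ together with the regularity of $u_0$, the weight $t^{\alpha/2}$ being exactly what compensates for the limited spatial smoothing near $t=0$. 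The third estimate parallels the second and is obtained by the same bootstrap, or alternatively deduced from it by writing $\partial_t^{m-\alpha}=\partial_t^{m-1}\partial_t\FI^\alpha$ and using the gain of $t^\alpha$ under $\FI^\alpha$, just as Theorem~\ref{thm: frac reg} was deduced from Theorem~\ref{thm: reg}.
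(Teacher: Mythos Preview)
Your split $u=v+w$ into the zero-source and zero-data parts, with Corollary~\ref{cor: reg simple} handling~$w$, is exactly the paper's opening move. The treatment of the homogeneous-source part~$v$, however, diverges. You pass to the fractional-diffusion reformulation with source $h$ that depends on~$v$ itself, and then face a self-referential bootstrap that you propose to close via Lemma~\ref{lem: Gronwall}. The paper instead stays with the \emph{original} weak formulation~\eqref{eq: weak u} (advection and reaction kept on the left) and sets $w=v-u_0$. Because the operators $\vec B_1$, $B_2$~and $\FI^\alpha\nabla$ act linearly, $w$ satisfies~\eqref{eq: weak u} with zero initial data and an \emph{explicit} right-hand side $\rho(t)=\FI^\alpha\nabla\cdot(\kappa\nabla u_0)-\nabla\cdot\vec B_1u_0-B_2u_0$ depending only on~$u_0$ and the coefficients---there is no self-reference at all. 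For $u_0\in\dot H^2(\Omega)$ one computes $\|\rho^{(j+1)}(t)\|\le C\|u_0\|_2\,t^{\alpha-1-j}$, so Corollary~\ref{cor: reg simple} (and, for $m=0$, Theorem~\ref{thm: existence}, part~2) applied to the $w$-problem with $M=C\|u_0\|_2$ and $\eta=\alpha$ delivers all three estimates at $\mu=2$ in one stroke; interpolation against the already-known case $\mu=0$ finishes.

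Your approach is not wrong in principle---the $t^{\alpha/2}$ gain per bootstrap iteration you describe is genuine---but the ``main obstacle'' you flag is an artifact of the reformulation you chose, and the paper's shift-by-$u_0$ trick removes it entirely. What your route would buy, if carried out, is independence from the interpolation step; what it costs is a coupled Gronwall argument for the integer- and fractional-order derivatives whose details you have not supplied. Your derivation of the first estimate by integrating $u'$ is fine for the $L_2$ part when $\mu>0$, but your handling of the gradient term is sketchy; in the paper it falls out of the same shifted problem as the $m=0$ case, with no separate argument needed.
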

\begin{proof}
Introduce the solution operator $u(t)=\mathcal{U}(u_0,g,t)$.  By linearity,
$u=u_1+u_2$ where $u_1(t)=\mathcal{U}(u_0,0,t)$~and
$u_2(t)=\mathcal{U}(0,g,t)$.  In view of Corollary \ref{cor: reg simple}, it
suffices to consider~$u_1$.  Let $w(t)=u_1(t)-u_0$ so that $w(0)=0$, and
suppose to begin with that $u_0\in\dot H^2(\Omega)$. Using~\eqref{eq: weak u},
we find that
\[
\iprod{w(t),v}+\bigiprod{\kappa(\FI^\alpha\nabla w)(t),\nabla v}
    -\bigiprod{(\vec B_1w)(t),\nabla v}+\iprod{(B_2w)(t),v}=\iprod{\rho(t),v},
\]
where $\rho(t)=\FI^\alpha\nabla\cdot(\kappa\nabla u_0)
-\nabla\cdot\vec B_1u_0-\vec B_2u_0$. Since 
$(\FI^\alpha u_0)'(t)=u_0\omega_\alpha(t)$, and recalling the definitions
\eqref{eq: vec B1 B2}, we have
\[
\rho'(t)=\bigl(\nabla\cdot(\kappa\nabla u_0)
	-\nabla\cdot(\vec F(t) u_0)-a(t)u_0\bigr)\omega_\alpha(t)
	-\nabla\cdot(\vec G(t)u_0)-b(t)u_0,
\]
so $\|\rho^{(j+1)}(t)\|\le C\|u_0\|_2t^{\alpha-j}$. Therefore, by
Corollary \ref{cor: reg simple},
\[
\|w^{(m)}(t)\|+t^{\alpha/2}\|\nabla w^{(m)}(t)\|
	\le Ct^{-m}\bigl(\|w(0)\|+\|u_0\|_2t^\alpha\bigr)=C\|u_0\|_2t^{\alpha-m},
\]
which proves the result for integer-order time derivatives in the case~$\mu=2$.
Similarly, for the fractional-order time derivatives,
\[\|\partial_t^{m-\alpha}w(t)\|+t^{\alpha/2}\|\partial_t^{m-\alpha}\nabla w(t)\|
\le Ct^{\alpha-m}\bigl(\|w(0)\|+\|u_0\|_2t^\alpha\bigr)
  =Ct^{2\alpha-m}\|u_0\|_2,\]
completing the proof for~$\mu=2$. Since Corollary \ref{cor: reg simple} also
implies the case~$\mu=0$, the result follows for $0<\mu<2$ by interpolation.
\end{proof}

With the help of Lemma \ref{lem: FI psi partial phi} from the Appendix, we can
generalize Theorem~\ref{thm: smoothing} as follows.

\begin{theorem}\label{thm: H2 reg}
Assume \eqref{eq: g Cm} ~and \eqref{eq: H2 kappa Omega}.  If 
$0\le\mu\le2$~and $u_0\in\dot H^\mu(\Omega)$, then
\[
\|u^{(m)}(t)\|_2\le Ct^{-m}\bigl(\|u_0\|_\mu t^{-(2-\mu)\alpha/2}
	+Mt^{\eta-\alpha}\bigr)\quad\text{for $0<t\le T$.}
\]
\end{theorem}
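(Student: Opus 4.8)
The plan is to exploit linearity and reduce to the two endpoint cases $\mu=0$ and $\mu=2$, then interpolate in the spatial regularity index, exactly as in the proof of Theorem~\ref{thm: u cts}. Writing $u=u_1+u_2$ with $u_1=\mathcal U(u_0,0,\cdot)$ and $u_2=\mathcal U(0,g,\cdot)$, the source contribution is immediate: applying Theorem~\ref{thm: smoothing} with $\mu=2$ to $u_2$, whose initial data vanishes, gives $\|u_2^{(m)}(t)\|_2\le Ct^{-m}Mt^{\eta-\alpha}$, which is precisely the second term in the claimed bound. It therefore remains to bound $\|u_1^{(m)}(t)\|_2$ by $Ct^{-m}\|u_0\|_\mu t^{-(2-\mu)\alpha/2}$, and for the endpoint $\mu=0$ there is nothing to do, since Theorem~\ref{thm: smoothing} taken with $\mu=2$ and $g=0$ already yields $\|u_1^{(m)}(t)\|_2\le Ct^{-m-\alpha}\|u_0\|$.

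The substance of the argument is the endpoint $\mu=2$, where I would reuse the shift $w=u_1-u_0$ from the proof of Theorem~\ref{thm: u cts}. Since $w(0)=0$ and $w$ satisfies the weak formulation \eqref{eq: weak u} with right-hand side $\iprod{\rho(t),v}$, where $\rho=\FI^\alpha\nabla\cdot(\kappa\nabla u_0)-\nabla\cdot\vec B_1u_0-B_2u_0$ and $\rho(0)=0$, we may regard $w=\mathcal U(0,\rho',\cdot)$. The crucial point is that the $H^2$-regularity hypothesis \eqref{eq: H2 kappa Omega} makes $\nabla\cdot(\kappa\nabla u_0)=-Au_0$ an $L_2$ function with $\|Au_0\|=\|u_0\|_2$, while the remaining terms of $\rho$ are fractional integrals of products of the smooth coefficients with the time-independent $u_0$; differentiating these repeatedly and estimating, which is where Lemma~\ref{lem: FI psi partial phi} enters, gives $\|\rho^{(j+1)}(t)\|\le C\|u_0\|_2 t^{\alpha-j}$. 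Thus $\rho'$ plays the role of a source satisfying \eqref{eq: g Cm} with constant $C\|u_0\|_2$ and exponent $\eta=1+\alpha$, so applying Theorem~\ref{thm: smoothing} to $w$ with $\mu=2$ produces $\|w^{(m)}(t)\|_2\le Ct^{-m}\cdot C\|u_0\|_2\,t^{(1+\alpha)-\alpha}=Ct^{1-m}\|u_0\|_2$. Because $u_1^{(m)}=w^{(m)}$ for $m\ge1$, and $u_1=w+u_0$ when $m=0$, and $t\le T$, this delivers the desired $\|u_1^{(m)}(t)\|_2\le Ct^{-m}\|u_0\|_2$.

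Finally I would interpolate. The map $u_0\mapsto u_1^{(m)}(t)$ is linear, and we have shown it is bounded from $L_2(\Omega)=\dot H^0(\Omega)$ into $\dot H^2(\Omega)$ with norm $\le Ct^{-m-\alpha}$ and from $\dot H^2(\Omega)$ into $\dot H^2(\Omega)$ with norm $\le Ct^{-m}$. Since the spaces $\dot H^\mu(\Omega)$ form the interpolation scale generated by the spectral powers of $A$, interpolation with $\mu=2\theta$ gives $\|u_1^{(m)}(t)\|_2\le C(t^{-m-\alpha})^{1-\theta}(t^{-m})^{\theta}\|u_0\|_\mu=Ct^{-m-(2-\mu)\alpha/2}\|u_0\|_\mu$, and adding the $u_2$ contribution completes the estimate. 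I expect the main obstacle to be the $\mu=2$ endpoint: one must confirm that $w$ genuinely solves a homogeneous-data problem with source $\rho'$, and that every time derivative of the fractional-integral terms in $\rho$ decays like $t^{\alpha-j}$ with a constant controlled by $\|u_0\|_2$. The favorable exponent $\eta=1+\alpha$, rather than something no larger than $\alpha$, is exactly what removes the singular factor $t^{-\alpha}$ present in the crude $\mu=0$ bound and makes the interpolated power $t^{-(2-\mu)\alpha/2}$ come out correctly.
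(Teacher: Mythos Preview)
Your approach is correct and is genuinely different from the paper's proof.  The paper does not apply Theorem~\ref{thm: smoothing} to the shifted function~$w$; instead it rewrites the PDE as the elliptic equation $-\nabla\cdot(\kappa\nabla u)=\rho$ with $\rho=\partial_t\FI^{1-\alpha}\bigl(u_0-u-\nabla\cdot\vec B_1u-B_2u\bigr)$, invokes $H^2$-regularity to get $\|u^{(m)}(t)\|_2\le C\|\rho^{(m)}(t)\|$, and then bounds $\rho^{(m)}$ via Lemma~\ref{lem: FI psi partial phi} together with the pointwise estimates of Theorem~\ref{thm: u cts} (including the fractional-order ones).  Your route is shorter: once you observe that $w=\mathcal U(0,\rho',\cdot)$ with the \emph{explicit} $\rho$ from the proof of Theorem~\ref{thm: u cts}, the $\mu=2$ endpoint follows from a single application of Theorem~\ref{thm: smoothing}, and Lemma~\ref{lem: FI psi partial phi} is not actually needed---the derivatives of $\rho$ can be read off directly from the formula $\rho'(t)=\bigl(\nabla\cdot(\kappa\nabla u_0)-\nabla\cdot(\vec F u_0)-au_0\bigr)\omega_\alpha(t)-\nabla\cdot(\vec Gu_0)-bu_0$.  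Two small corrections: first, that formula gives $\|\rho^{(j+1)}(t)\|\le C\|u_0\|_2\,t^{\alpha-1-j}$ (the dominant factor is $\omega_\alpha^{(j)}(t)\sim t^{\alpha-1-j}$), so the correct exponent is $\eta=\alpha$, not $\eta=1+\alpha$; the paper's own statement of this bound in the proof of Theorem~\ref{thm: u cts} carries the same typo.  Second, and consequently, Theorem~\ref{thm: smoothing} yields $\|w^{(m)}(t)\|_2\le Ct^{-m}\|u_0\|_2$ rather than $Ct^{1-m}\|u_0\|_2$, but of course this is still exactly what you need, and the interpolation step is identical to the paper's.
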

\begin{proof}
We know from Theorem\ref{thm: smoothing} that
\begin{equation}\label{eq: H2 reg A}
\|u^{(m)}(t)\|_2\le Ct^{-\alpha-m}\bigl(\|u_0\|+Mt^\eta\bigl),
\end{equation}
so there is nothing to prove if $u_0=0$.  Thus, by linearity, we may assume
that $g(t)\equiv0$ and so~$M=0$. Integrating~\eqref{eq: FPDE} in time, we see
that
\[
u-\nabla\cdot(\kappa\nabla\FI^\alpha u)+\nabla\cdot\vec B_1u+B_2u=u_0
\]
and, after applying the operator~$\partial_t\FI^{1-\alpha}$ to both sides,
\[
-\nabla\cdot(\kappa\nabla u)=\rho\quad\text{where}\quad
\rho=\partial_t\FI^{1-\alpha}\bigl(u_0-u-\nabla\cdot\vec B_1u -B_2u).
\]
Since $-\nabla\cdot(\kappa\nabla u^{(m)})=\rho^{(m)}$ in~$\Omega$, with 
$u^{(m)}(t)=0$ on~$\partial\Omega$ for~$0\le t\le T$, it follows by 
$H^2$-regularity for the Poisson problem
that
\begin{equation}\label{eq: H2 reg B}
\|u^{(m)}(t)\|_2\le C\|\rho^{(m)}(t)\|.
\end{equation}
The identity~\eqref{eq: FI omega} (with $m=1$) implies that
\begin{align*}
\rho&=\FI^{1-\alpha}\partial_t\bigl(u_0-u-\nabla\cdot\vec B_1u-B_2u)\\
	&=-\FI^{1-\alpha}u'
		-\FI^{1-\alpha}\bigl(\nabla\cdot(\vec F\partial_t^{1-\alpha}u+\vec Gu)
		+a\partial_t^{1-\alpha}u+bu\bigr),
\end{align*}
and Lemma \ref{lem: FI psi partial phi}~and Theorem~\ref{thm: u cts} (with 
$\mu=2$) imply that
\begin{align*}
t^{m+1}\|\partial_t^m\FI^{1-\alpha}u'(t)\|
	&\le C\max_{0\le s\le t}\sum_{j=0}^m s^{1-\alpha+1+j}\|u^{(j+1)}(s)\|\\
	&\le C\max_{0\le s\le t}s^{1-\alpha}
		\bigl(\|u_0\|_2s^\alpha\bigr)= Ct\|u_0\|_2.
\end{align*}
Since $\nabla\cdot(\vec F\partial_t^{1-\alpha}u)
=(\nabla\cdot\vec F)\partial_t^{1-\alpha}u
+\vec F\cdot\partial_t^{1-\alpha}\nabla u$, we see from
Lemma~\ref{lem: FI psi partial phi} and Theorem~\ref{thm: u cts} that
\begin{align*}
t^{m+1}\bigl\|\partial_t^m\FI^{1-\alpha}&\bigl(
\nabla\cdot(\vec F\partial_t^{1-\alpha}u)\bigr)(t)\bigr\|\\
	&\le C\max_{0\le s\le t}\sum_{j=0}^m s^{1-\alpha+1+j}\Bigl(
	\|\partial_s^j(\partial_s^{1-\alpha}u)\|
	+\|\partial_s^j(\partial_s^{1-\alpha}\nabla u)\|\Bigr)\\
	&=C\max_{0\le s\le t}\sum_{j=0}^m s^{1-\alpha+1+j}\Bigl(
	\|\partial_s^{j+1-\alpha}u\|+\|\partial_s^{j+1-\alpha}\nabla u\|\Bigr)\\
	&\le C\max_{0\le s\le t}s^{1-\alpha}(s^\alpha+s^{\alpha/2})
	\bigl(\|u_0\|_2s^\alpha\bigr)\le Ct^{1+\alpha/2}\|u_0\|_2.
\end{align*}
Similarly,
$t^{m+1}\bigl\|\partial_t^m\FI^{1-\alpha}(a\partial_t^{1-\alpha}u)\bigr\|
\le Ct^{1+\alpha/2}\|u_0\|_2$, whereas
\begin{align*}
t^{m+1}\bigl\|\partial_t^m\FI^{1-\alpha}&\bigl(
\nabla\cdot(\vec Gu)\bigr)(t)\bigr\|\\
	&\le C\max_{0\le s\le t}\biggl(
	s^{2-\alpha}\bigl(\|u(s)\|+\|\nabla(u-u_0)(s)\|+\|\nabla u_0\|\bigr)\\
	&\qquad{}+\sum_{j=1}^m s^{1-\alpha+1+j}\bigl(
		\|u^{(j)}(s)\|+\|\nabla u^{(j)}(s)\|\bigr)\biggr)\\
	&\le C\max_{0\le s\le t}\Bigl(s^{2-\alpha}\bigl[\|u_0\|
		+s^{-\alpha/2}(\|u_0\|_2s^\alpha)+\|\nabla u_0\|\bigr]\\
	&\qquad{}+s^{2-\alpha}(1+s^{-\alpha/2})
		\bigl(\|u_0\|_2s^\alpha\bigr)\Bigr)\le Ct^{2-\alpha}\|u_0\|_2
\end{align*}
and $t^{m+1}\bigl\|\partial_t^m\FI^{1-\alpha}(bu)\bigr\|
\le Ct^{2-\alpha}\|u_0\|_2$.  Thus,
\[
t^{m+1}\|\rho^{(m)}(t)\| 
    \le C\bigl(t+t^{1+\alpha/2}+t^{2-\alpha}\bigr)\|u_0\|_2,
\]
showing that $t^m\|\rho^{(m)}(t)\|\le C\|u_0\|_2$
and therefore, by \eqref{eq: H2 reg A}~and \eqref{eq: H2 reg B},
\[
\|u^{(m)}(t)\|_2\le Ct^{-m-\alpha}\|u_0\|
\quad\text{and}\quad
\|u^{(m)}(t)\|_2\le t^{-m}\|u_0\|_2,
\]
which proves the result in the cases $\mu=0$~and $\mu=2$.  The case~$0<\mu<2$ 
then
follows by interpolation.
\end{proof}
\appendix
\section{Further technical lemmas}\label{sec: tech}

The following identity uses the notation from Lemma \ref{lem: m-fold}.  

\begin{lemma}\label{lem: q derivative}
Let $\mu>0$ and $1\le q\le m$. If, for $m-q+1\le j\le m$,
\[
\Mult^j\phi\in W^{j-(m-q)}_1(0,T)
\]
with 
\[
(\partial_t^k\Mult^j\phi)(0)=0\quad\text{for $0\le k\le j-(m-q)-1$,}
\]
then
\[
\partial_t^q\Mult^m\FI^\mu\phi
	=\sum_{j=0}^{m-q}\tilde d^{m,\mu}_j\FI^{\mu+m-q-j}\Mult^j\phi
	+\sum_{j=m-q+1}^m\tilde d^{m,\mu}_j\FI^\mu\partial_t^{j-(m-q)}\Mult^j\phi.
\]
\end{lemma}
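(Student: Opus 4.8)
The plan is to differentiate the identity \eqref{eq: m-fold 4} directly. Rewriting that identity with the tilde coefficients by substituting $\ell = m-j$, it becomes
\[
\Mult^m\FI^\mu\phi = \sum_{j=0}^m \tilde d^{m,\mu}_j\,\FI^{\mu+m-j}\Mult^j\phi,
\]
so applying $\partial_t^q$ term by term reduces the whole problem to evaluating $\partial_t^q\FI^{\mu+m-j}\Mult^j\phi$ for each $0\le j\le m$. The exponent $\mu+m-j$ governs how many of the $q$ derivatives can be absorbed \emph{for free} by the fractional integral, and this is exactly what splits the sum into the two displayed ranges. I would therefore treat the ranges $0\le j\le m-q$ and $m-q+1\le j\le m$ separately.

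For the low range $0\le j\le m-q$ I would use only the semigroup property of $\FI$. Here $m-j\ge q$, so $\mu+m-j\ge\mu+q>q$, and writing $\FI^{\mu+m-j}=\FI^q\FI^{\mu+m-q-j}$ one checks that every intermediate reduction $\partial_t\FI^\nu=\FI^{\nu-1}$ acts on an order $\nu>1$; hence $\partial_t^q\FI^{\mu+m-j}\Mult^j\phi=\FI^{\mu+m-q-j}\Mult^j\phi$ with no boundary contributions, producing the first sum. This is precisely why the lemma imposes no regularity or vanishing-trace hypotheses on $\Mult^j\phi$ for $j$ in this range.

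For the high range $m-q+1\le j\le m$ I would split $\partial_t^q=\partial_t^{\,j-(m-q)}\partial_t^{\,m-j}$ and $\FI^{\mu+m-j}=\FI^{m-j}\FI^\mu$. The first $m-j$ derivatives collapse against $\FI^{m-j}$ by the fundamental theorem of calculus (again every step acts on an order $>1$), leaving $\partial_t^{\,j-(m-q)}\FI^\mu\Mult^j\phi$. Setting $r=j-(m-q)$, which satisfies $1\le r\le q$, I then apply \eqref{eq: FI omega} with $r$ in place of $m$ and $\Mult^j\phi$ in place of $\phi$; this is legitimate exactly because the hypothesis $\Mult^j\phi\in W^{j-(m-q)}_1(0,T)=W^r_1(0,T)$ holds. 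The result has a main term $\FI^\mu\partial_t^r\Mult^j\phi$ together with a boundary sum $\sum_{k=0}^{r-1}(\partial_t^k\Mult^j\phi)(0)\,\omega_{\mu-k}$, and the second hypothesis $(\partial_t^k\Mult^j\phi)(0)=0$ for $0\le k\le j-(m-q)-1=r-1$ kills that sum exactly. Summing over $j$ yields the second sum and completes the identity.

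The main obstacle I anticipate is not any single estimate but the index bookkeeping: one must verify that the split point $j=m-q$ separates precisely the terms where the full $q$-fold differentiation can be absorbed from those where only $m-j$ derivatives can, and that the leftover order $r=j-(m-q)$ matches \emph{both} the Sobolev regularity $W^{j-(m-q)}_1$ and the number of vanishing traces required to apply \eqref{eq: FI omega} without residue. A secondary care point is confirming that each intermediate $\partial_t\FI^\nu=\FI^{\nu-1}$ reduction genuinely acts on an order $\nu>1$, so that no spurious boundary term $\omega_{\nu-1}(t)$ is generated before one reaches $\FI^\mu$.
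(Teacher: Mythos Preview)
Your proposal is correct and follows essentially the same route as the paper: start from \eqref{eq: m-fold 4} rewritten with the $\tilde d^{m,\mu}_j$, split the sum at $j=m-q$, collapse $\partial_t^q\FI^q$ (respectively $\partial_t^{m-j}\FI^{m-j}$) to the identity in the low (respectively high) range, and then invoke \eqref{eq: FI omega} together with the vanishing-trace hypotheses to swap $\partial_t^{j-(m-q)}$ past $\FI^\mu$ without residue. The paper's only stylistic difference is that it writes $\partial_t^q\FI^q=\mathrm{id}$ and $\partial_t^{m-j}\FI^{m-j}=\mathrm{id}$ directly (integer-order fundamental theorem of calculus) rather than tracking each single-step reduction $\partial_t\FI^\nu=\FI^{\nu-1}$; your more granular bookkeeping is harmless but not needed.
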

\begin{proof}
By \eqref{eq: m-fold 4},
\[
\Mult^m\FI^\mu
	=\sum_{j=0}^{m-q} \tilde d^{m,\mu}_j\FI^{\mu+m-j}\Mult^j
	+\sum_{j=m-q+1}^m \tilde d^{m,\mu}_j\FI^{\mu+m-j}\Mult^j.
\]
If $0\le j\le m-q$, then $m-q-j\ge0$ so $\partial_t^q\FI^{\mu+m-j}
=\partial_t^q\FI^q\FI^{\mu+m-q-j}=\FI^{\mu+m-q-j}$. Therefore, 
\[
\partial_t^q\sum_{j=0}^{m-q} \tilde d^{m,\mu}_j\FI^{\mu+m-j}\Mult^j\phi
	=\sum_{j=0}^{m-q} \tilde d^{m,\mu}_j\FI^{\mu+m-q-j}\Mult^j\phi
\quad\text{for $\phi\in L_1(0,T)$.}
\]

If $m-q+1\le j\le m$ then $j-(m-q)\ge1$ so
\[
\partial_t^q\FI^{\mu+m-j}
	=\partial_t^{q-(m-j)}\partial_t^{m-j}\FI^{m-j} \FI^\mu
	=\partial_t^{j-(m-q)}\FI^\mu
\]
and thus
\[
\partial_t^q\sum_{j=m-q+1}^m \tilde d^{m,\mu}_j\FI^{\mu+m-j}\Mult^j\phi
	=\sum_{j=m-q+1}^m\tilde d^{m,\mu}_j\partial_t^{j-(m-q)}\FI^\mu\Mult^j\phi.
\]
By \eqref{eq: FI omega},
\[
\partial_t^{j-(m-q)}\FI^\mu\Mult^j\phi=\FI^\mu\partial_t^{j-(m-q)}\Mult^j\phi
	+\sum_{k=0}^{j-(m-q)-1}(\partial_t^k\Mult^j\phi)(0)\,\omega_{\mu-k},
\]
and our hypotheses on~$\phi$ ensure that all terms in the sum over~$k$ vanish.
 \end{proof}

The next lemma was used in the proof of Lemma \ref{lem: stability C}.

\begin{lemma}\label{lem: B integrals}
Let $\psi\in W^{2m-1}_\infty\bigl((0,T);L_\infty(\Omega)^d\bigr)$ 
for some~$m\ge1$ and let $\mu\ge0$. Then,
\[
\Q^{0,m}(B^\mu_\psi\phi,t)\le C\sum_{j=0}^m\Q^{\mu,j}_2(\phi,t)
\quad\text{for $0\le t\le T$ and $\phi\in C^m_\alpha$.}
\]
\end{lemma}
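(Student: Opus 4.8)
The plan is to expand $B^{\mu,m}_\psi\phi=\partial_t^m\Mult^m B^\mu_\psi\phi$ into a finite linear combination of elementary terms and bound each against the right-hand side. Using the form $B^\mu_\psi\phi=\psi\FI^\mu\phi-\FI^1(\psi'\FI^\mu\phi)$ from \eqref{eq: Bmu}, I would treat the two summands separately. For the first, since $\Mult^m$ commutes with multiplication by $\psi$, I apply \eqref{eq: m-fold 4} to get $\Mult^m\FI^\mu\phi=\sum_{i=0}^m\tilde d^{m,\mu}_i\FI^{\mu+m-i}\Mult^i\phi$ and then distribute $\partial_t^m$ by the Leibniz rule, producing factors $\psi^{(k)}$ ($0\le k\le m$) times $\partial_t^{m-k}\FI^{\mu+m-i}\Mult^i\phi$. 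For the second summand I apply \eqref{eq: m-fold 4} to $\Mult^m\FI^1$ and use the cancellation $\partial_t^n\FI^n=\mathrm{Id}$, which collapses $\partial_t^m\FI^{1+m-i}$ to $\partial_t^{i-1}$ for $i\ge1$ and to $\FI^1$ for $i=0$ (equivalently, invoke Lemma~\ref{lem: q derivative} directly). Each differentiation of a fractional integral is resolved through \eqref{eq: FI omega}, and the boundary contributions $(\partial_t^k\Mult^i\phi)(0)$ all vanish because $\phi\in C^m_\alpha$ forces $\Mult^i\phi=O(t^{\alpha-1+i})$ near $t=0$. Since $0\le\mu\le1$ we may write $\FI^1=\FI^{1-\mu}\FI^\mu$, so after this reduction every surviving term has the form (bounded coefficient)$\times\Mult^b\FI^{\mu+r}\partial_t^p\Mult^i\phi$ with $r,b\ge0$ and $p\le i\le m$, except for the single term $\FI^1(\psi'\FI^\mu\phi)$ in which the bounded coefficient $\psi'$ sits inside the integral.

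The crux is to match the derivative order $p$ to the power $i$ so as to recover the building block of $\Q^{\mu,i}_2$. This rests on a purely algebraic fact: the Leibniz rule gives $(\Mult^l\phi)^{(l)}=\sum_{a=0}^l\gamma_{l,a}\Mult^a\partial_t^a\phi$ with $\gamma_{l,l}=1$, so the families $\{(\Mult^l\phi)^{(l)}\}_{l=0}^m$ and $\{\Mult^a\partial_t^a\phi\}_{a=0}^m$ are related by an invertible triangular matrix of constants; hence each $\Mult^a\partial_t^a\phi$ is a constant-coefficient combination of the $(\Mult^l\phi)^{(l)}$ with $l\le a$. A second Leibniz expansion shows that for $p\le i$ one has $\partial_t^p\Mult^i\phi=\Mult^{i-p}\sum_{a=0}^p c_a\Mult^a\partial_t^a\phi$, i.e. a bounded power $\Mult^{i-p}$ multiplying matched-order blocks. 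Combining the two facts, and moving the leftover powers of $\Mult$ across $\FI^\mu$ by \eqref{eq: m-fold 3}, each reduced term becomes a bounded coefficient times $\FI^r\bigl(\FI^\mu(\Mult^l\phi)^{(l)}\bigr)$ for some $l\le m$ and some $r\ge0$.

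Finally I would estimate in the $\Q^0$ norm, i.e. the square of the $L_2\bigl((0,t);L_2(\Omega)\bigr)$ norm. The powers $\Mult^b$ are bounded by $T^b$ on $[0,T]$; the coefficients $\psi^{(k)}$ are bounded by hypothesis, the assumption $\psi\in W^{2m-1}_\infty$ amply covering every $\psi$-derivative generated by the Leibniz steps; and the surplus integration satisfies $\Q^0(\FI^r Y,t)\le C\,\Q^0(Y,t)$ for $r\ge0$ by Young's convolution inequality (and likewise when a bounded coefficient such as $\psi'$ is enclosed by the integral). Applying these reductions term by term, together with the triangular identity of the previous paragraph, gives $\Q^{0,m}(B^\mu_\psi\phi,t)=\int_0^t\|B^{\mu,m}_\psi\phi\|^2\,ds\le C\sum_{l=0}^m\int_0^t\|\FI^\mu(\Mult^l\phi)^{(l)}\|^2\,ds=C\sum_{l=0}^m\Q^{\mu,l}_2(\phi,t)$, which is the assertion. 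I expect the main obstacle to be the bookkeeping in the reduction of the first paragraph and, especially, checking that every application of \eqref{eq: FI omega} leaves no boundary term behind; this is exactly where the decay encoded in $\phi\in C^m_\alpha$, guaranteeing $(\partial_t^k\Mult^i\phi)(0)=0$ for $0\le k\le i-1$, is indispensable.
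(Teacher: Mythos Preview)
Your approach is correct and rests on the same machinery as the paper---the commutator identities of Lemma~\ref{lem: m-fold}, the vanishing initial values $(\partial_t^k\Mult^i\phi)(0)=0$ for $k\le i-1$, and the reduction of $\FI^{\mu+r}$ to $\FI^\mu$ at the cost of a power of~$t$---but the organization differs in one substantive way. The paper first integrates $B^\mu_\psi\phi=\FI^1(\psi\partial_t^{1-\mu}\phi)$ by parts $m$~times to obtain
\[
B^\mu_\psi\phi=\sum_{i=0}^{m-1}(-1)^i\psi^{(i)}\FI^{\mu+i}\phi
	+(-1)^m\FI^1\bigl(\psi^{(m)}\FI^{\mu+m-1}\phi\bigr),
\]
and only then applies $\partial_t^m\Mult^m$; a subsequent Leibniz expansion then produces factors $\psi^{(i+m-q)}$ with $0\le i\le m-1$ and $0\le q\le m$, which is precisely why the hypothesis~$\psi\in W^{2m-1}_\infty$ is needed. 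You instead work directly from the one-step form~\eqref{eq: Bmu} and never see a $\psi$-derivative of order higher than~$m$, so your argument would in fact go through under the weaker assumption $\psi\in W^m_\infty$. The paper also reduces $\Q^{\mu+r}_2$ to $\Q^\mu_2$ via Lemma~\ref{lem: E}, whereas you invoke Young's convolution inequality; and where you use the explicit triangular inversion between $\{\Mult^a\partial_t^a\phi\}$ and $\{(\Mult^l\phi)^{(l)}\}$, the paper accomplishes the same via repeated use of \eqref{eq: m-fold 1} and~\eqref{eq: m-fold 3}. These are genuine but minor simplifications on your side; the underlying ideas coincide. Two small remarks: the restriction $0\le\mu\le1$ you insert is harmless (it is already implicit in the definition~\eqref{eq: Bmu}) but is not actually needed for the exceptional term $\FI^1(\psi'\FI^\mu\phi)$, since a direct Cauchy--Schwarz estimate gives $\Q^0\bigl(\FI^1(\psi'\FI^\mu\phi),t\bigr)\le Ct^2\Q^{\mu,0}_2(\phi,t)$ without splitting $\FI^1$; and your bookkeeping of which boundary terms vanish is exactly the content of Lemma~\ref{lem: q derivative}, which you may as well cite.
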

\begin{proof}
We integrate by parts $m$~times to obtain
\[
B^\mu_\psi\phi=\FI^1(\psi\partial_t^{1-\mu}\phi)
	=\sum_{i=0}^{m-1}(-1)^i\psi^{(i)}\FI^{\mu+i}\phi
	+(-1)^m\FI^1\bigl(\psi^{(m)}\FI^{\mu+m-1}\phi\bigr),
\]
and so
\begin{equation}\label{eq: B integrals A}
B^{\mu,m}_\psi\phi=(\Mult^mB^\mu_\psi\phi)^{(m)}
	=\sum_{i=0}^m(-1)^i\B^m_i\phi,
\end{equation}
where
\[
\B^m_i\phi=\begin{cases}
\partial_t^m\Mult^m\bigl(\psi^{(i)}\FI^{\mu+i}\phi\bigr)
	&\text{for $0\le i\le m-1$,}\\
\partial_t^m\Mult^m\FI^1\bigl(\psi^{(m)}\FI^{\mu+m-1}\phi\bigr)
	&\text{for $i=m$.}
\end{cases}
\]
If $0\le i\le m-1$, then
\[
\B^m_i\phi=\partial_t^m\bigl(\psi^{(i)}(\Mult^m\FI^{\mu+i}\phi)\bigr)
	=\sum_{q=0}^m\binom{m}{q}\psi^{(i+m-q)}\partial_t^q\Mult^m\FI^{\mu+i}\phi
\]
so our assumption on~$\psi$ implies that
\begin{equation}\label{eq: B integrals B}
\|(\B^m_i\phi)(t)\|\le C\sum_{q=0}^m
	\bigl\|\partial_t^q(\Mult^m\FI^{\mu+i}\phi)(t)\bigr\|.
\end{equation}
By Lemma \ref{lem: q derivative},
\begin{multline*}
\partial_t^q\Mult^m\FI^{\mu+i}\phi
	=\sum_{j=0}^{m-q}\tilde d^{m,\mu+i}_j\FI^{\mu+i+m-q-j}\Mult^j\phi\\
	+\sum_{j=m-q+1}^m\tilde d^{m,\mu+i}_j
		\FI^{\mu+i}\partial_t^{j-(m-q)}\Mult^j\phi,
\end{multline*}
and by \eqref{eq: m-fold 3},
\[
\FI^\mu\Mult^j=\sum_{k=0}^jc^{j,\mu}_k\Mult^{j-k}\FI^{\mu+k}
    \quad\text{for $\mu>0$ and $j\ge0$,}
\]
with
\[
\partial_t^q\Mult^j=\sum_{r=0}^qa^{j,q}_r\Mult^{j-r}\partial_t^{q-r}
	=\sum_{r=0}^q\tilde a^{j,q}_r\Mult^{j-q+r}\partial_t^r
	\quad\text{for $1\le q\le j$.}
\]
Thus, for $0\le j\le m-q$,
\[
\FI^{\mu+i+m-q-j}\Mult^j\phi
    =\sum_{k=0}^jc^{j,\mu+i+m-q-j}_k\Mult^{j-k}\FI^{\mu+i+m-q-j+k}\phi
\]
and for $m-q+1\le j\le m$,
\begin{multline*}
\FI^{\mu+i}\partial_t^{j-(m-q)}\Mult^j\phi
=\sum_{r=0}^{j-(m-q)}\tilde a^{j,j-(m-q)}_r\FI^{\mu+i}\Mult^{m-q}
	\Mult^r\partial_t^r\phi\\
=\sum_{r=0}^{j-(m-q)}\tilde a^{j,j-(m-q)}_r\sum_{k=0}^{m-q}c^{m-q,\mu+i}_k
            \Mult^{m-q-k}\FI^{\mu+i+k}\Mult^r\partial_t^r\phi,
\end{multline*}
so
\begin{align*}
\bigl\|(\partial_t^q&\Mult^m\FI^{\mu+i}\phi)(t)\bigr\|^2
    \le C\sum_{j=0}^{m-q}\bigl\|(\FI^{\mu+i+m-q-j}\Mult^j\phi)(t)\|^2\\
    &\qquad{}+C\sum_{j=m-q+1}^m
    \bigl\|(\FI^{\mu+i}\partial_t^{j-(m-q)}\Mult^j\phi)(t)\bigr\|^2\\
    &\le C\sum_{j=0}^{m-q}\sum_{k=0}^j
    \bigl\|(\Mult^{j-k}\FI^{\mu+i+m-q-j+k}\phi)(t)\bigr\|^2\\
    &\qquad+C\sum_{j=m-q+1}^m\sum_{r=0}^{j-(m-q)}\sum_{k=0}^{m-q}
    \bigl\|(\Mult^{m-q-k}\FI^{\mu+i+k}\Mult^r\partial_t^r\phi)(t)\bigr\|^2.
\end{align*}
Integrating in time, since
$\Q^0(\Mult^j\FI^\mu\phi,t)\le t^{2j}\Q^\mu_2(\phi,t)$, we see that
\begin{multline*}
\Q^0\bigl(\partial_t^q\Mult^m\FI^{\mu+i}\phi, t\bigr)
    \le C\sum_{j=0}^{m-q}\sum_{k=0}^j t^{2(j-k)}\Q^{\mu+i+m-q-j+k}_2(\phi,t)\\
    +C\sum_{j=m-q+1}^m\sum_{r=0}^{j-(m-q)}\sum_{k=0}^{m-q}
        t^{2(m-q-k)}\Q^{\mu+i+k,r}_2(\phi,t)
\end{multline*}
and therefore, by Lemma \ref{lem: E},
\begin{equation}\label{eq: B integrals C}
\Q^0\bigl(\partial_t^q\Mult^m\FI^{\mu+i}\phi, t\bigr)
    \le Ct^{2(i+m-q)}\sum_{r=0}^q\Q^{\mu,r}_2(\phi,t).
\end{equation}
Hence, recalling \eqref{eq: B integrals B},
\begin{equation}\label{eq: B integrals D}
\Q^0(\B^m_i\phi,t)\le C\sum_{q=0}^m
    \Q^0\bigl(\partial_t^q\Mult^m\FI^{\mu+i}\phi, t\bigr)
    \le Ct^{2i}\sum_{r=0}^m\Q^{\mu,r}_2(\phi,t),~~{\rm for}~~0\le i\le m-1.
\end{equation}

It remains to estimate
$\B^m_m\phi=\partial_t^m\Mult^m\FI^1(\psi^{(m)}\FI^{\mu+m-1}\phi)$.
Taking $q=m$~and $\mu=1$ in Lemma \ref{lem: q derivative} gives
\[
\partial_t^m\Mult^m\FI^1=\tilde d^{m,1}_0\FI^1+\sum_{j=1}^m\tilde d^{m,1}_j
    \FI^1\partial_t^j\Mult^j,
\]
and so
\[
\B^m_m\phi=\tilde d^{1,m}_0\FI^1(\psi^{(m)}\FI^{\mu+m-1}\phi)
	+\sum_{j=1}^m\tilde d^{1,m}_j\partial_t^{j-1}
		(\psi^{(m)}\Mult^j\FI^{\mu+m-1}\phi).
\]
Thus,
\begin{multline*}
\Q^0(\B^m_m\phi,t)\le C\Q^0\bigl(\FI^1(\psi^{(m)}\FI^{\mu+m-1}\phi),t\bigr)\\
    +C\sum_{j=1}^m\sum_{q=0}^{j-1}
    \Q^0\bigl(\partial_t^q\Mult^j\FI^{\mu+m-1}\phi,t\bigr),
\end{multline*}
and since
\begin{align*}
\bigl\|\bigl(\FI^1(\psi^{(m)}\FI^{\mu+m-1}\phi\bigr)(t)\bigr\|^2
    &\le\biggl(\int_0^t\|\psi^{(m)}(s)\|^2\,ds\biggr)
    \biggl(\int_0^t\|\FI^{\mu+m-1}\phi)(s)\|^2\,ds\biggr)\\
    &\le Ct\Q^{\mu+m-1}_2(\phi,t)
\end{align*}
we have, by Lemma \ref{lem: E},  
\[
\Q^0\bigl(\FI^1(\psi^{(m)}\FI^{\mu+m-1}\phi),t\bigr)
\le Ct^2\Q^{\mu+m-1}_2(\phi,t)\le Ct^{2m}\Q^\mu_2(\phi,t).
\]
Finally, using \eqref{eq: B integrals C} with~$m$ replaced
by~$j$ and with~$i$ replaced by $m-1$,
\begin{align*}
\Q^0(\B^m_m\phi,t)&\le Ct^{2m}\Q^\mu_2(\phi,t)
+C\sum_{j=1}^m\sum_{q=0}^{j-1}t^{2(m-1+j-q)}\sum_{r=0}^q\Q^{\mu,r}_2(\phi,t)\\
    &\le Ct^{2m}\sum_{q=0}^{m-1}\Q^{\mu,r}_2(\phi,t).
\end{align*}
The result now follows from \eqref{eq: B integrals A}~and
\eqref{eq: B integrals D}.
\end{proof}

We used the following result in the proof of Theorem~\ref{thm: H2 reg}.

\begin{lemma}\label{lem: FI psi partial phi}
If $m\ge0$, $\psi\in W^m_\infty\bigl((0,T);L_\infty(\Omega)^d\bigr)$ and
\[
\Mult^k\phi\in W^k_1(0,T)\quad\text{for $0\le k\le m+1$,}
\]
with
\[
(\partial_t^q\Mult^k\phi)(0)=0
	\quad\text{for $1\le q\le k-1$ and $1\le k\le m+1$,}
\]
then
\[
t^{m+1}\bigl\|\partial_t^m\FI^\mu(\psi\phi)(t)\bigr\|
	\le C\max_{0\le s\le t}\sum_{j=0}^m\|s^{\mu+1+j}\phi^{(j)}(s)\|
	\quad\text{for $0<t\le T$.}
\]
\end{lemma}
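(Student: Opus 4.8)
The plan is to prove the pointwise bound by first multiplying through by $\Mult^{m+1}$ and then using the commutator calculus of Lemma~\ref{lem: m-fold} to trade the factor $t^{m+1}$ against the $m$ time derivatives, so that every derivative can eventually be moved onto $\phi$ inside a fractional integral. Writing $\Phi(t)=\max_{0\le s\le t}\sum_{j=0}^m\|s^{\mu+1+j}\phi^{(j)}(s)\|$ for the right-hand side, the identity \eqref{eq: m-fold 2} (with powers $m+1$ and derivative order $m$) expands $\Mult^{m+1}\partial_t^m$ as a finite combination $\sum_{q=0}^m\beta_q\,\partial_t^q\Mult^{q+1}$ with constant coefficients $\beta_q$. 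Thus it suffices to show $\|(\partial_t^q\Mult^{q+1}\FI^\mu(\psi\phi))(t)\|\le C\Phi(t)$ for each $0\le q\le m$; the gain is that here the power of $\Mult$ exceeds the derivative order by exactly one.

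Next I would apply Lemma~\ref{lem: q derivative} to $\psi\phi$ with its parameters equal to $q+1$ and $q$ (so that $M-q=1$). Its hypotheses must be checked: since $\Mult^j(\psi\phi)=\psi\,\Mult^j\phi$ and $\psi\in W^m_\infty$, the required $W^{j-1}_1$-regularity and the vanishing $(\partial_t^k(\psi\Mult^j\phi))(0)=0$ for $0\le k\le j-2$ follow from $\Mult^j\phi\in W^j_1$, from the assumed vanishing of $\partial_t^q\Mult^j\phi$ at the origin, and from $(\Mult^j\phi)(0)=0$ (which is forced by $\phi\in L_1$). The lemma then produces, for $q\ge1$,
\[
\partial_t^q\Mult^{q+1}\FI^\mu(\psi\phi)
=c_0\,\FI^{\mu+1}(\psi\phi)+c_1\,\FI^\mu(\psi\,\Mult\phi)
+\sum_{j=2}^{q+1}c_j\,\FI^\mu\partial_t^{j-1}(\psi\,\Mult^j\phi),
\]
while for $q=0$ the commutator $\Mult\FI^\mu-\FI^\mu\Mult=\mu\FI^{\mu+1}$ from \eqref{eq: commutator} gives $\Mult\FI^\mu(\psi\phi)=\FI^\mu(\psi\,\Mult\phi)+\mu\FI^{\mu+1}(\psi\phi)$, so the same list of building blocks appears.

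For the differentiated and weighted terms ($j\ge1$), the estimate is routine. Expanding $\partial_t^{j-1}(\psi\Mult^j\phi)$ by the Leibniz rule and using \eqref{eq: m-fold 1}, one gets $\|(\partial_s^i\Mult^j\phi)(s)\|\le C\sum_{p=0}^i s^{j-i+p}\|\phi^{(p)}(s)\|=C\,s^{j-i-\mu-1}\sum_p\|s^{\mu+1+p}\phi^{(p)}(s)\|\le C\,s^{j-i-\mu-1}\Phi(s)$, where $0\le i\le j-1$ so that $j-i\ge1$. Bounding $\|\FI^\mu w\|\le\int_0^t\omega_\alpha(t-s)\|w(s)\|\,ds$ (with order $\mu$) and pulling out $\Phi(t)$, each term reduces to a Beta-type integral $\int_0^t\omega_\mu(t-s)\,s^{(j-i-\mu)-1}\,ds=C\,t^{j-i-1}$; since $0\le j-i-1\le m$, this is a bounded power of $t$ on $[0,T]$, giving the desired $C\Phi(t)$. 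The weighted term $\FI^\mu(\psi\Mult\phi)$ is just the case $j-i=1$.

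The \emph{main obstacle} is the single undifferentiated and unweighted term $\FI^{\mu+1}(\psi\phi)$. The crude pointwise bound $\|\phi(s)\|\le s^{-\mu-1}\Phi(s)$ now leads to $\int_0^t\omega_{\mu+1}(t-s)\,s^{-\mu-1}\,ds$, whose singularity at $s=0$ is non-integrable, so the Beta-integral mechanism fails precisely for this term: the extra integration order $\mu+1$ is not compensated by any power of $\Mult$. The hard part is therefore to control $\FI^{\mu+1}(\psi\phi)$ without the lossy pointwise estimate. I would exploit the Sobolev hypothesis $\Mult\phi\in W^1_1$ together with $(\Mult\phi)(0)=0$: writing $\FI^{\mu+1}=\FI^\mu\FI^1$ and transferring one derivative onto $\Mult\phi$ by integration by parts (equivalently, applying Fubini to replace $\phi$ by $(\Mult\phi)'$), one recovers a convergent kernel and an estimate matching $C\Phi(t)$. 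This interplay between the fractional weight $\omega_{\mu+1}$ and the $W^1_1$-regularity of $\Mult\phi$ at the origin is the delicate core of the argument; once it is settled, the remaining steps are bookkeeping with the commutator identities of Lemma~\ref{lem: m-fold} and elementary Beta integrals.
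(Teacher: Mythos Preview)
Your route is essentially the paper's: both invoke~\eqref{eq: m-fold 2} to write $\Mult^{m+1}\partial_t^m$ as a combination of $\partial_t^q\Mult^{q+1}$, then push the weight past $\FI^\mu$ (you via Lemma~\ref{lem: q derivative}, the paper via~\eqref{eq: m-fold 4} together with $\partial_t^j\FI^{\mu+j+1-k}=\partial_t^k\FI^{\mu+1}$), and finish with Leibniz, \eqref{eq: m-fold 1}, and Beta integrals. Your treatment of the weighted and differentiated pieces ($j\ge1$ in your decomposition) is correct and matches the paper.

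The difference is that you isolate $\FI^{\mu+1}(\psi\phi)$ as the ``main obstacle'', and you are right to do so: the paper handles the corresponding contribution (its case $k=q$) by the semigroup identity $(\omega_{\mu+1}*\omega_{-\mu})(t)=\omega_1(t)$, but the actual integral $\int_0^t\omega_{\mu+1}(t-s)\,s^{-\mu-1}\,ds$ that arises after pulling out the maximum diverges at $s=0$, so that step is only formal. Unfortunately your proposed repair does not close the gap either. Writing $\phi=(\Mult\phi)'-\Mult\phi'$ and using $(\psi\Mult\phi)(0)=0$ gives
\[
\FI^{\mu+1}(\psi\phi)=\FI^\mu(\psi\,\Mult\phi)-\FI^{\mu+1}(\psi'\,\Mult\phi)
	-\FI^{\mu+1}(\psi\,\Mult\phi');
\]
the first two pieces are bounded by $C\Phi(t)$ via the convergent integrals $\int_0^t\omega_\mu(t-s)s^{-\mu}\,ds$ and $\int_0^t\omega_{\mu+1}(t-s)s^{-\mu}\,ds$, but the last reproduces the \emph{same} divergent kernel, now against $\|s^{\mu+2}\phi'(s)\|$. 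For $m=0$ this derivative is not even dominated by $\Phi$, and for $m\ge1$ iterating the trick only pushes the bad term to higher derivatives of~$\phi$ without ever terminating. Indeed, the inequality as stated cannot hold with a constant independent of~$\phi$: taking $\psi\equiv1$, $m=0$ and $\phi(s)=s^{\delta-1}$ (which satisfies every hypothesis for each $\delta>0$) yields $t\,\FI^\mu\phi(t)=\bigl(\Gamma(\delta)/\Gamma(\mu+\delta)\bigr)\,t^{\mu+\delta}$ while $\Phi(t)=t^{\mu+\delta}$, so the ratio blows up as $\delta\to0^+$. Thus the ``delicate core'' you flagged is a genuine gap shared by both arguments; the lemma evidently needs an additional hypothesis on~$\phi$ (one that is satisfied in the applications within Theorem~\ref{thm: H2 reg}).
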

\begin{proof}
By \eqref{eq: m-fold 2},
\begin{equation}\label{eq: FI psi partial phi A}
\begin{aligned}
\bigl\|\Mult^{m+1}\partial_t^m\FI^\mu(\psi\phi)\|
&=\biggl\|\sum_{j=0}^m\tilde b^{m+1,m}_j\partial_t^j\Mult^{1+j}\FI^\mu(\psi\phi)
	\biggr\|\\
&\le C\sum_{j=0}^m\bigl\|\partial_t^j\Mult^{j+1}\FI^\mu(\psi\phi)(t)\bigr\|,
\end{aligned}
\end{equation}
and in turn,
\[
\partial_t^j\Mult^{j+1}\FI^\mu(\psi\phi)=\sum_{k=0}^{j+1}\tilde
	d^{j+1,\mu}_k\partial_t^j\FI^{\mu+j+1-k}\Mult^k(\psi\phi).
\]
Since $\partial_t^j\FI^{\mu+j+1-k}
=\partial_t^j(\partial_t\FI^1)\FI^{\mu+j+1-k}
=\partial_t^k(\partial_t^{j+1-k}\FI^{j+1-k})\FI^{\mu+1}
=\partial_t^k\FI^{\mu+1}$ for~$0\le k\le j+1$,
\begin{multline}\label{eq: FI psi partial phi B}
\|\partial_t^j\Mult^{j+1}\FI^\mu(\psi\phi)(t)\|
	\le C\sum_{k=0}^{j+1}\|\partial_t^j\FI^{\mu+j+1-k}\Mult^k(\psi\phi)\|\\
	=C\sum_{k=0}^{j+1}\|\partial_t^k\FI^{\mu+1}\Mult^k(\psi\phi)(t)\|
	=C\sum_{k=0}^{j+1}\|\FI^{\mu+1}\partial_t^k\Mult^k(\psi\phi)(t)\|
\end{multline}
where, in the last step, we used the fact that
$\partial_t^q\Mult^k(\psi\phi)(0)=0$ for~$0\le q\le k-1$.  We have
\begin{align*}
\partial_t^k\Mult^k(\psi\phi)&=\partial_t^k(\psi\Mult^k\phi)
	=\sum_{q=0}^k\binom{k}{q}\psi^{(k-q)}\partial_t^q\Mult^k\phi\\
	&=\sum_{q=0}^k\binom{k}{q}\psi^{(k-q)}
	\sum_{r=0}^q\tilde a^{k,q}_r\Mult^{k-(q-r)}\partial_t^r\phi,
\end{align*}
and hence
\begin{align*}
\|\FI^\mu\partial_t^k\Mult^k(\psi\phi)(t)\|
	&\le C\sum_{q=0}^k\sum_{r=0}^q
	\|(\FI^{\mu+1}\Mult^{k-q+r}\partial_t^r\phi)(t)\|\\
	&=C\sum_{q=0}^k\sum_{r=0}^q\int_0^t\omega_{\mu+1}(t-s)s^{k-q-\mu-1}
		\|s^{r+\mu+1}\phi^{(r)}(s)\|\,ds\\
	&\le C\sum_{r=0}^k\biggl(\max_{0\le s\le t}\|s^{r+\mu+1}\phi^{(r)}(s)\|
		\biggr)\sum_{q=r}^k(\omega_{\mu+1}*\omega_{k-q-\mu})(t).
\end{align*}
Since $(\omega_{\mu+1}*\omega_{k-q-\mu})(t)=\omega_{k-q+1}(t)\le Ct^{k-q}$.
the result now follows from \eqref{eq: FI psi partial phi A}~and
\eqref{eq: FI psi partial phi B}.
\end{proof}
\bibliographystyle{elsarticle-num}
\bibliography{regularity_ref}
\end{document}